\documentclass[reqno,12pt]{amsart}
\usepackage{geometry}
\usepackage{amsfonts,amsmath,amssymb,amsthm,mathtools}
\usepackage[foot]{amsaddr}
\usepackage[colorlinks=true,citecolor=blue,urlcolor=black,linkcolor=blue]{hyperref}
\usepackage[numbers,sort&compress]{natbib}
\usepackage{graphicx}
\usepackage{subcaption}
\usepackage{pgfplots,pgfplotstable}
\pgfplotsset{compat=1.18}
\usepackage{tikz}
\usetikzlibrary{arrows.meta,positioning,chains,fit,shapes,calc,decorations}
\newcommand{\err}{t}
\newcommand{\Err}{T}
\newcommand*{\infn}[1]{\lVert{#1}\rVert}
\newcommand{\BFgamma}{\boldsymbol{\gamma}}
\newcommand{\BFGamma}{\boldsymbol{\Gamma}}
\newcommand{\Plim}{P_0}
\newcommand{\RR}{\mathbb R}
\newcommand{\NN}{\mathbb N}
\newcommand{\PP}{\mathbb P}
\newcommand{\EE}{\mathbb E}
\newcommand{\ve}{\varepsilon}
\newcommand{\tV}{\widetilde{V}}
\newcommand{\ec}{\mathbf{e(c)}}
\newcommand{\co}{\mathbf{c}}
\newcommand{\mA}{\mathcal{A}}
\newcommand{\mB}{\mathcal{B}}
\newcommand{\mC}{\mathcal{C}}
\newcommand{\mD}{\mathcal{D}}
\newcommand{\mE}{\mathcal{E}}
\newcommand{\mF}{\mathcal{F}}
\newcommand{\mG}{\mathcal{G}}
\newcommand{\mS}{\mathcal{S}}
\newcommand{\mT}{\mathcal{T}}
\newcommand{\Acrit}{A_{\normalfont\text{crit}}}
\DeclareMathOperator{\CR}{CR}
\DeclareMathOperator{\mCR}{{\overline {\CR}}}
\DeclareMathOperator{\Elev}{Elev}
\DeclareMathOperator{\Var}{Var}

\theoremstyle{plain}
\newtheorem{theorem}{Theorem}
\newtheorem{corollary}{Corollary}
\newtheorem{proposition}{Proposition}
\newtheorem{lemma}{Lemma}

\theoremstyle{definition}
\newtheorem{remark}{Remark}
\newtheorem{definition}{Definition}
\newtheorem{example}{Example}
\newtheorem*{example*}{Example}

\title[Evolution models with vanishing mutations]{On the occupation measure of evolution models with vanishing mutations}

\author[M. Bena{\"i}m]{Michel Bena{\"i}m$^{\ast}$}
\thanks{}
\address{$^{\ast}$Institut de Math\'{e}matiques, Universit\'{e} de Neuch\^{a}tel, Switzerland} 
\email{ \href{mailto:michel.benaim@unine.ch}{\nolinkurl{michel.benaim@unine.ch}}}

\author[M. Bravo]{Mario Bravo$^{\dagger}$}
\thanks{}
\address{$^{\dagger}$Facultad de Administración y Economía, Universidad de Santiago de Chile, Chile} 
\email{ \href{mailto:mario.bravo.g@usach.cl}{\nolinkurl{mario.bravo.g@usach.cl}}}

\author[M. Faure]{Mathieu Faure$^{\ddagger}$}
\address{$^{\ddagger}$Aix-Marseille University, CNRS, AMSE, France}
\email{ \href{mailto:mathieu.faure@univ-amu.fr}{\nolinkurl{mathieu.faure@univ-amu.fr}}}

\begin{document}

\begin{abstract}
We study the almost sure convergence of the occupation measure of evolution models where mutation rates decrease over time. We show that if the mutation parameter vanishes at a controlled rate, then the empirical occupation measure converges almost surely to a specific invariant distribution of a limiting Markov chain. Our results are obtained through the analysis of a larger class of time-inhomogeneous Markov chains with finite state space, where the control on the mutation parameter is explained by the energy barrier of the limit process. Additionally, we derive an explicit convergence rate, explained also through the tree-optimality gap, that may be of independent interest.

\end{abstract}

\maketitle
\begin{small}
  \noindent{\bf Keywords:}  Evolution models, Inhomogeneous Markov chain,  Occupation measure, Energy barrier
\end{small}

\section{Introduction}
Evolution models have been a successful way to explain why some outcomes may be more plausible than others in dynamic strategic situations. A widely adopted approach to tackle this issue relies on stochastic evolution dynamics—models where agents adjust their strategies based on past payoffs and are occasionally subject to random perturbations or mutations (\citep{KMR93,young1993evolution,kanRob95}). As an illustration, in the seminal model by \citep{KMR93}, a finite population of agents repeatedly plays a symmetric game. Agents revise their strategies using an imitation dynamics but with a small probability of making mistakes, interpreted as random mutations whose rate is constant across states, agents, and over time. So, the process induces an ergodic Markov chain over the distribution of actions in the population and an action profile is said to be asymptotically stable if it belongs to the support of the limiting invariant distribution as the mutation tends to zero. As a counterpart, \citep{BerLip96} show that, without precise assumptions on the mutation process, any invariant distribution can be selected by choosing mutation effects that vary across states.

In this vein, \citep{Ell00} introduces a general framework that captures several features of the interactions, also assuming constant noise level. For instance, when considering a game played by a population, this allows to include effects as correlations or mutations that depend on realized payoffs. Given that the limit process is not necessarily ergodic, the approach studies the stochastic interaction among the recurrent classes induced in the limit of zero mutations. Central to the analysis are the concepts of radius—the minimal resistance (i.e., cost in terms of mutations) required to escape a stable state—and coradius—the resistance required to reach it. These quantities allow for a comparison of the robustness of equilibria and help identify which states are likely to be stochastically stable. A related model, that cannot be cast in the framework of \citep{Ell00}, is the so-called logit-response model (\citep{blume1993statistical,alos2010logit}), where the mutation structure is taken as a logit function of payoffs. The reader is referred to \citep[Chapter 12]{sandholm2010population} for further discussions and references on this type of models.

If mutations are interpreted as the result of dynamical experimentation, it is natural to allow the rate to vary over time, as it explains learning behaviors where agents experiment less as they gain experience. To the best of our knowledge, there exist few papers that study this case. In the framework of dynamics of equilibrium selection in evolutionary game theory, \citep{chen2001convergence} examine how the rate at which mutation probabilities decrease influences the convergence to a risk-dominant Nash equilibrium in $2\times 2$ coordination games. They show that if mutation rates decline too rapidly, the system may fail to converge.  Using a formulation that is equivalent to  considering vanishing mutations, \citep{sandholm1998evolution} study population growth on evolutionary dynamics. They prove that when growth is at least logarithmic, the evolutionary process converges to a single equilibrium. \citep{Pak08} adapts the approach in \citep{Ell00} to consider evolution models where the mutation rate declines over time, while the notion of selection refers to the convergence in distribution of the induced inhomogeneous Markov chain. Roughly speaking, \citep{Pak08} shows that if the mutation parameter decreases sufficiently slowly, at a rate controlled by the coradius and a cost of transition between classes in the limit, then the induced time-inhomogeneous Markov chain is strongly ergodic. This complement a similar result by \citep{Rob98} which is only valid where the minimum coradius recurrent class in the limit is aperiodic (see Section~\ref{sec:related} for a precise discussion).

\subsection{Our Contributions}
In this paper, we study a broad class of inhomogeneous evolution models, derived from the general framework of \citep{Ell00}. Our focus is on the long-term behavior of the empirical occupation measure when the noise in the system gradually vanishes over time. Viewing the model as one generated by repeated dynamical interactions, we adopt a natural notion of selection: a state is stable if, as time goes to infinity, the empirical occupation measure assigns a positive weight to it. Our main result, Theorem~\ref{thm:main}, shows that almost sure convergence of the occupation measure occurs provided the mutation rate decreases slowly enough. The admissible rate of decay is determined by the energy barrier associated with the graph structure induced by the limiting process. In addition, we establish an explicit speed of convergence in the $L^1$ norm, where the rate is also explained through the tree-optimality gap of said graph. We also show that the energy barrier is always smaller than the coradius-like quantity previously used to analyze these models via convergence in distribution (cf. Theorem~\ref{thm:comparison}).

From a technical standpoint, our model boils down to a time-inhomogeneous Markov chain, derived from a family of Markov chain with rare transitions.  Interestingly, even though our approach is primarily inspired by evolution models, the general class of processes we study in this paper contains some inhomogeneous chains already studied in the literature where we can recover or improve some known results (cf.~Proposition~\ref{prop:cloez}).   To study the occupation measure of such nonstationary chains, our approach relies on several ideas, crucially depending on estimations on the spectral gap of the induced time-dependent kernels, adapted to the nonreversible case through additive symmetrization. Using this estimates, we borrow some ideas from the literature on stochastic approximation algorithms for differential inclusion to obtain almost sure convergence and to derive explicit convergence rates.

\subsection{Organization of the paper}
This paper is organized as follows. In Section~\ref{sec:models}, we introduce the general model of evolution with vanishing noise and state our main result, Theorem~\ref{thm:main}. Section~\ref{sec:examples} presents examples illustrating the scope of the theorem, in particular showing how it recovers and complements certain results for related inhomogeneous Markov chains studied in the literature. In Section~\ref{sec:related}, we briefly compare our framework with existing work on inhomogeneous models of evolution and clarify the connection between the energy barrier and standard notions in this area. Section~\ref{sec:proof} discusses the main elements of the proof of Theorem~\ref{thm:main}, while the omitted proofs are collected in the Appendix.

\section{Model and main result} \label{sec:models}
In this paper $\mS$ is a finite state space and we denote by $\Delta(\mS)$ the set of probability distributions over the finite set $\mS$, that is  
$$\Delta(\mS)= \left\{ z \in \RR^{\mS} \, :\, \sum \nolimits_{x \in \mS} z(x) =1, \, z \geq 0\right\}.$$

\subsection{Homogeneous models of evolution}\hfill

\begin{definition} A map $\co: \mS \times \mS \rightarrow [0,+ \infty]$ is said to be an \emph{admissible cost function} if, for any $(x,y) \in \mS \times \mS$, there exists $m \in \NN$ and $(x_0,x_1, x_2,...,x_m) \in \mS^{m+1}$ such that $x_0=x$, $x_m=y$ and  $\co(x_i,x_{i+1}) < + \infty, \; i = 0,\ldots,m-1$.
\end{definition}

 The following is the notion of a model of evolution we adopt in this paper.
\begin{definition}\label{def:hom_ev}  Let $\co:\mS \times \mS \rightarrow [0,+ \infty]$ be an admissible cost function and let $\overline \ve>0$. An (homogeneous) model of evolution associated to $\co$ is a family of Markov kernels $\left (P_{\ve}\right )_{\ve \in (0,\overline \ve]}$ on $\mS$ satisfying the following property: for each $x,y \in \mS$, there exists $k(x,y)>0$ and a Lipschitz function $h_{x,y}$ such that $h_{x,y}(0)=0$ and 
\begin{equation}\label{eq:Pe}
\begin{cases}
 \dfrac{P_{\ve}(x,y)}{\ve^{\co(x,y)}} = k(x,y)(1+h_{x,y}(\ve))>0, & \text{ if } \co(x,y) < +\infty,\\[0.15cm]
P_{\ve}(x,y)=0, & \text{ otherwise. }
\end{cases}
\end{equation} 
\end{definition}
 Observe that from \eqref{eq:Pe} we have that the limit matrix  $\Plim:=\lim_{\epsilon \rightarrow 0} P_{\epsilon}$ exists. Without loss of generality, we assume throughout this paper that $\overline \ve=1$, and thus we refer to a model of evolution simply by $(P_\ve)_\ve$. Additionally, since $P_{\ve}$ is irreducible, it admits a unique invariant probability $\pi_{\ve} \in \Delta(\mS)$. Recall that for a sufficiently regular family of irreducible kernels $(P_\ve)_\ve$ with corresponding invariant distributions $(\pi_\ve)_\ve$, there exists $\pi^* \in \Delta(\mS)$ such that $\lim_{\ve \rightarrow 0} \pi_{\ve} = \pi^*$; we adopt this notation throughout. Moreover, $\pi^*$ is invariant under $\Plim$, which may admit other invariant distributions, and its support minimizes a related potential on $\mS$. We refer the reader to \citep{FreWen84} for further discussions and related notions.

\begin{remark}\label{rem:technical}
In Definition~\ref{def:hom_ev}, we assume that $k(x,y)>0$ for all $x,y \in \mS$, and that $P_\ve(x,y)=0$ for all $\ve$ small whenever the cost is $+\infty$. Under mild additional conditions, one could also analyze the case where $P_\ve(x,y)=h_{x,y}(\ve)$ with $h_{x,y}(\ve)\to 0$ as $\ve \to 0$. Also, it is possible to treat the case in which the functions $h_{x,y}$ are Hölder continuous instead of Lipschitz. For the sake of simplicity, we do not pursue this generalization here. However, we do present in Section~\ref{sec:examples} an example involving a related inhomogeneous Markov chain that exhibits this type of structure.
\end{remark}

\vspace{-1.2ex}

Definition~\ref{def:hom_ev}\,  builds on \citep[Definition~1]{Ell00}, but differs from it in two important ways. On the one hand, we require that for every $\ve \in (0,1]$ the transition matrix $P_\ve$ be irreducible, without imposing aperiodicity. On the other hand, we strengthen the regularity assumptions on the dependence of transition probabilities on $\ve$ by assuming the existence of Lipschitz functions $h_{x,y}$ that capture their asymptotic behavior as $\ve \to 0$, similarly to \citep{Pak08}.

To contrast, in addition to ergodicity, \citep[Definition~1]{Ell00} assumes that for any pair $(x,y)$ with $\co(x,y)<+\infty$,
\[\lim_{\ve \rightarrow 0} \dfrac{P_{\ve}(x,y)}{\ve^{\co(x,y)}} = k(x,y)>0,\]
with $P_{\ve}(x,y)=0$ otherwise. The strengthening of these regularity assumptions is motivated by technical considerations: it ensures sufficient stability of the associated inhomogeneous dynamics as the mutation parameter vanishes, ruling out oscillatory behavior. Importantly, these assumptions remain compatible with the standard classes of evolutionary models, including those in which transition probabilities depend polynomially on the mutation rate, as in \citep{young1993evolution,KMR93} or \citep{kanRob95}.

\begin{remark}\label{rem:MCRT}
Given a model of evolution $(P_{\ve})_{\ve}$, setting $\beta := - \log (\ve)$ and $p_{\beta} := P_{e^{-\beta}}$, a simple manipulation shows that, for all $(x,y)\in \mS\times \mS$ such that $\co(x,y)< +\infty$, 
\begin{equation*}\label{eq:rare}
     \lim_{\beta \rightarrow + \infty} \frac{- \log p_{\beta}(x,y) }{\beta} = \co(x,y),
\end{equation*}
with the convention $\log(0)=-\infty$.
So, in the terminology of  \citep{FreWen84}, the family of homogeneous Markov chains $(p_{\beta})_{\beta}$ has rare transitions with rate function $\co$. So, the class of models considered in this paper can be seen as belonging to this family with extra regularity as $\ve$ goes to zero. We refer the reader to \citep{Cat99} for additional references and results.
\end{remark}

\subsection{Inhomogeneous models of evolution}
Consider a model of evolution $(P_\ve)_\ve$ together with a strictly positive vanishing sequence $(\ve_n)_n$. These two ingredients jointly define what we call an {\em inhomogeneous model of evolution}, made precise below.  As mentioned earlier, these models are designed to capture the idea that in repeated dynamical interactions, agents become progressively less likely to make mistakes over time.

\begin{definition}[Inhomogeneous model of evolution] 
Let $(\ve_n)_n$ be a vanishing sequence of positive real numbers.  
The {\em inhomogeneous model of evolution associated to $(P_{\ve})_{\ve}$ and mutation rate $(\ve_n)_n$} is the time-inhomogeneous Markov process $(X_n)_n$ on $\mS$, defined by $X_1 = x_1 \in \mS$, and for all $n \geq 1$,
\[
\PP \left(X_{n+1} = y \mid \, X_{n}\right) = P_{\ve_n}(X_n,y), 
\quad x,y \in \mS.
\]
\end{definition}

 Throughout this paper, we denote by $(v_n)_n$ the sequence of empirical occupation measures associated to $(X_n)_n$ defined as
\begin{equation*}
v_n :=\frac{1}{n} \sum_{i=1}^{n} \delta_{X_i}  \in \Delta(\mS), \quad \text{for all }\, n \geq 1,
\end{equation*}
where $\delta_x \in \Delta(\mS)$ stands for the Dirac's delta at $x \in \mS$.

\subsection{Energy barrier and tree-optimality gap}

Let us first recall the notion of energy barrier associated to an admissible cost function $\co$, through the definition of an appropriate potential function on the set $\mS$ (cf. \citep{FreWen84, Mic92, trouve1996rough, Cat99}).  

For an admissible cost function $\co$, we define the (quasi) potential function $V$ as follows. Given a directed graph on $\mS$, $g \in 2^{\mS \times \mS}$ is an \emph{$x$-tree} if:

\begin{itemize}   
\item[$(i)$] The set of successors of $x$ in $g$ is empty.
\item[$(ii)$] For any $x' \in \mS \setminus \{x\}$, $x'$ has exactly one successor in $g$.
\item[$(iii)$] There is no loops in $g$. 
\end{itemize}

\vspace{1ex}
We denote by $\co(g)$  the aggregate cost of $g$, i.e. $\co(g) = \sum_{e \in g} \co(e)$, and we call $\mG_x$ the set of $x$-trees which have finite cost.

\begin{definition}
Given an admissible cost function $\co$, the (quasi) potential function $V:\mS \to [0,\infty)$ is defined by
\[V(x) := \tV(x) - \min_{x' \in \mS} \tV(x'), \; \text{ where } \; \tV(x) := \min_{g \in \mG_x} \co(g), \quad  x \in \mS.\]
\end{definition}
 Observe that the potential $V$ is always finite and that $\min_{x \in \mS} V(x) = 0$. This is an extension of the notion of energy barrier to the case when the cost $\co$ is induced by a potential function, that is $U(x)+\co(x,y)=U(y)+\co(y,x)$ for some $U \in \RR^\mS$. This framework has been widely used to study, for instance, the well-known simulated annealing algorithm (\citep{HolStr88}).

From now on, we denote by $\Gamma_{x,y}$ the set of paths from $x$ to $y$, that is
\[\Gamma_{x,y}:= \left\{\gamma=(x_0,...,x_k) \in \mS^{k+1}: \; \, x_0=x, \, x_k=y, \,\text{for some} \,\, k\in \NN\right\}.\]
Note that we do not restrict the definition of the set $\Gamma_{x,y}$ to paths with finite cost. For any edge $e=(x_-,x_+)$, its potential is defined as
\begin{equation*} 
W(e) := \min \{V(x_-) + \co(x_-,x_+), V(x_+) + \co(x_+,x_-) \}.
\end{equation*}
 For all $x, y \in \mS$, we define the elevation from $x$ to $y$ by
\begin{equation*}
\Elev(x,y) := \min_{\gamma \in \Gamma_{x,y}} ; \max_{e \in \gamma} W(e).
\end{equation*}
\begin{definition} The energy barrier relative to $\co$ is the finite and nonnegative quantity
\[\ec: = \max_{x,y \in \mS}\, \left\{\Elev(x,y) - V(x) - V(y)\right\}.\]
\end{definition}
 The reader is referred to \citep{Cat99} for further developments and references regarding the energy barrier.
 
Let us define the set of all $x$-trees $\mG= \bigcup_{x \in \mS}\mG_x$, the set $\mG^0$ of optimal trees:
\begin{equation}\label{eq:g0}
  \mG^0:=\{g \in \mG\,:\, \co(g)=\min_{z \in \mS}\min_{g_z \in \mG_z} \co(g_z)\},  
\end{equation}
and $\mG^1:=\mG \setminus \mG^0$ its complement. Also let us denote by $\co_0$ the minimum cost among all trees, that is, 
$$\co_0:= \min_{z \in \mS}\min_{g_z \in \mG_z}\co(g_z).$$
 Finally, for an admissible cost function $\co$, an important quantity for our analysis is the cost-difference between the second best and the optimal tree:
 \begin{definition}
 The {\em tree-optimality gap} of $\co$ is defined as
     \begin{equation} \label{eq:theta}
   \theta:= \min_{g \in \mG^1} \co(g) - \co_0 >0,
\end{equation}
with the convention that $\theta=+\infty$ if $\mG^1=\emptyset$, , that is, if the admissible cost $c$ satisfies $\co(x,y) = c$ for all $x \neq y$ such that $\co(x,y) < +\infty$.
\end{definition}

\subsection*{Additional notation}
In what follows, we adopt the standard asymptotic notation for sequences. That is, for $(a_n)_n$ and $(b_n)_n$ two sequences of real numbers, we say that $a_n=O(b_n)$ (resp. $a_n=\Omega(b_n)$) if there exists a constant $C$ such that $|a_n|\leq C|b_n|$ (resp. $|a_n|\geq C|b_n|$) for sufficiently large $n$.   We use $\widetilde O(b_n)$ to hide poly logarithmic terms of $b_n$. Also, for $U \in \RR^{\mS}$, we let $\infn{U}= \max_{x} |U(x)|$ be the infinity norm of $U$ and for $Q \in \RR^{\mS \times \mS}$ we denote by $\infn{Q}=\max_{x}\sum_y|Q(x,y)|$ the induced infinity norm over the matrices. Sometimes, $\infn{U}$ will denote the $\ell_1$ norm of $U$ when working in $\RR^{\mS}$, viewed as the space of signed measures.
\subsection{Main result}
We consider the following class of mutation rates. 

\begin{definition}
We say that the mutation rate $(\ve_n)_n$  is \emph{vanishing at speed $A >0$} (or \emph{$A$-vanishing}) if $\ve_n \to 0$ and
\begin{equation*}
   \left \{ \begin{aligned}
        &\ve_n= \Omega \left (n^{-A}\right),\\
        &|\ve_{n+1}-\ve_n|= O\left (n^{-(1+A)}\right).
    \end{aligned}
    \right.
\end{equation*}
\end{definition}

Observe that if $(\ve_n)_n$ is $A$-vanishing, then, using a telescoping argument as well as the fact that $\epsilon_n$ goes to zero,  one can check that $\ve_n=O(n^{-A})$.

\vspace{2ex}
We can now present the main result of this paper.

\begin{theorem} \label{thm:main} Let $(X_n)_{n\geq 1}$ be an inhomogeneous model of evolution with admissible cost function ${\bf c}$, 
and $A$-vanishing mutation rate $(\ve_n)_n$ with $2A\ec < 1$. 
Then\\[-2ex]
\begin{itemize}\setlength\itemsep{0.7em}
    \item[$(a)$]  $\lim_{n \to \infty} v_n = \pi^* \quad \mbox{almost surely}$;
    \item[$(b)$] Define $\Acrit := \frac{1}{2\left(\ec + \min \{\theta,1\}\right)}$. Then
    \begin{equation} \label{eq:rate}
    \EE(\infn{v_n -\pi^*}) = \begin{cases}
\widetilde O\left(\dfrac{1}{n^{A \min \{\theta,1\}}}\right)  & \text{ if } \; 0< A \leq \Acrit,\\[0.2cm]
\widetilde O\left(\dfrac{1}{n^{1/2 -A \ec}}\right) & \text{ if } \; \Acrit < A < \frac{1}{2\ec}.
\end{cases}\end{equation}
\end{itemize}
\end{theorem}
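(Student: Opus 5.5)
We plan to analyze $v_n$ through the Poisson equation associated with the frozen kernels $P_{\ve_n}$, as in stochastic approximation. We write
\[
v_n-\pi^* = (v_n - \bar\pi_n) + (\bar\pi_n-\pi^*), \qquad \bar\pi_n:=\frac1n\sum_{i=1}^n \pi_{\ve_i}.
\]
The second term is deterministic. By the Markov chain tree theorem, $\pi_\ve(x)$ is a ratio of sums $\sum_{g\in\mG_x}\prod_{e\in g}P_\ve(e)$, and the Lipschitz regularity of the $h_{x,y}$ gives $\infn{\pi_\ve-\pi^*}=O(\ve^{\min\{\theta,1\}})$. Here $\theta$ comes from the suboptimal trees, and the exponent $1$ comes from the $h_{x,y}$ corrections to the optimal trees. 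Averaging with $\ve_i=O(i^{-A})$ then gives $\infn{\bar\pi_n-\pi^*}=O(n^{-A\min\{\theta,1\}})$ when $A\min\{\theta,1\}<1$, which accounts for the first regime of \eqref{eq:rate}. The same tree formula, together with $|\ve_{n+1}-\ve_n|=O(n^{-1-A})$, yields the control $\infn{\pi_{\ve_{n+1}}-\pi_{\ve_n}}=O(n^{-1}\ve_n^{-1}|\ve_{n+1}-\ve_n| n)=O(n^{-1})$, up to constants.

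For the fluctuation term, we fix $f\in\RR^\mS$ and solve the Poisson equation $(I-P_{\ve_n})u_n = f-\pi_{\ve_n}(f)$ with $\pi_{\ve_n}(u_n)=0$. The key quantitative input is a bound $\infn{u_n}=\widetilde O(\ve_n^{-\ec})$, together with a derivative bound $\infn{u_{n+1}-u_n}=\widetilde O(\ve_n^{-2\ec}\,|\ve_{n+1}-\ve_n|/\ve_n)$ or a similar polynomial loss. To obtain the first bound we estimate the spectral gap of the additive symmetrization $\tfrac12(P_\ve+P_\ve^*)$ in $L^2(\pi_\ve)$ by a canonical-path (Poincaré) argument. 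Along the path realizing $\Elev(x,y)$, each edge $e$ has conductance $\pi_\ve(x_-)P_\ve(x_-,x_+)\asymp \ve^{W(e)}$, since $\pi_\ve(x)\asymp\ve^{V(x)}$ by the tree theorem. This gives a gap of order $\ve^{\ec}$, and hence $\infn{u}\lesssim \ve^{-\ec}$ up to factors $\min_x\pi_\ve(x)^{-1/2}$. Those factors have to be absorbed carefully, for instance by working directly with weighted norms. For nonreversible chains, the symmetrized gap still controls $\|(I-P)^{-1}\|$ on mean-zero functions through the Dirichlet form identity $\langle (I-P)u,u\rangle_\pi=\mathcal E_{\rm sym}(u,u)$.

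With $u_n$ in hand, we decompose
\[
\sum_{i\le n}\big(f(X_i)-\pi_{\ve_i}(f)\big)=\sum_i \big(u_i(X_{i+1})-P_{\ve_i}u_i(X_i)\big)+\sum_i\big(u_{i+1}(X_{i+1})-u_i(X_{i+1})\big)+\text{boundary terms}.
\]
The first sum is a martingale $M_n$ with $\EE M_n^2\lesssim\sum_i \ve_i^{-2\ec}\lesssim n^{1+2A\ec}$, so $\EE|M_n|/n=O(n^{-1/2+A\ec})$. This is exactly the second regime, and the condition $2A\ec<1$ is what makes it vanish. The telescoping-correction sum is controlled by $\sum_i\infn{u_{i+1}-u_i}$, which we expect to be $\widetilde O(n^{A\ec})$ or smaller using the $A$-vanishing regularity, hence negligible. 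The boundary term is $\infn{u_n}/n=O(n^{A\ec-1})$. Taking the maximum of the two rates, with $\Acrit$ the crossover point, gives (b).

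For almost sure convergence in (a), we apply a martingale strong law: $\sum_i \EE|\Delta M_i|^2/i^2\lesssim\sum_i i^{2A\ec-2}<\infty$, so $M_n/n\to0$ a.s. The remaining terms vanish deterministically, and we conclude by taking $f$ ranging over the indicators $1_x$. The main obstacle is the spectral-gap/Poisson bound with the sharp exponent $\ec$ in the nonreversible setting. In particular, the canonical-path argument must not lose an extra power of $\ve$ through the $\pi_\ve^{-1}$ factors, and the regularity of $u_n$ in $n$ must be controlled. For the latter we would first try the resolvent identity $u_{n+1}-u_n=(I-P_{\ve_{n+1}})^{-1}(P_{\ve_{n+1}}-P_{\ve_n})u_n$ plus a centering correction.
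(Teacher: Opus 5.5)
Your architecture is the paper's. You use the Poisson equation for the frozen kernels $P_{\ve_n}$ and a bias/martingale/boundary/correction decomposition (the paper's $U_n^0,\dots,U_n^3$). The gap bound $\lambda(P_\ve)\gtrsim\ve^{\ec}$ comes from Diaconis--Stroock paths along elevation-minimizing routes for $\frac12(P_\ve+P_\ve^*)$. Note that it is the symmetrized conductance $\frac12(\pi_\ve(x_-)P_\ve(x_-,x_+)+\pi_\ve(x_+)P_\ve(x_+,x_-))$ that is $\gtrsim\ve^{W(e)}$, not the one-directional flow. Part (a) then follows from the martingale strong law. Your bias estimate and your tree-formula bound $\infn{\pi_{\ve_{n+1}}-\pi_{\ve_n}}=O(n^{-1})$ are fine.

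The genuine gap is the step you flag as the main obstacle and leave open: the sup-norm bound $\infn{u_n}=\widetilde O(\ve_n^{-\ec})$. Passing from $L^2(\pi_\ve)$ to $\ell^\infty$ by Cauchy--Schwarz costs $\min_x\pi_\ve(x)^{-1/2}\asymp\ve^{-\max_x V(x)/2}$. That factor is polynomial in $n$, so it would effectively replace $\ec$ by $\ec+\frac12\max_x V(x)$ in both the condition $2A\ec<1$ and the rates. Weighted norms do not by themselves rescue this. The martingale increments $u_i(X_{i+1})-P_{\ve_i}u_i(X_i)$ and the boundary term $u_{n+1}(X_{n+1})/n$ are evaluated along the actual trajectory. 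Its law is not $\pi_{\ve_i}$ and can charge low-probability states (for instance the initial state), so genuine $\ell^\infty$ control is needed.

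The missing idea is an $L^2\to\ell^\infty$ conversion that loses only a \emph{logarithm} of $1/\min_x\pi_\ve(x)$. The paper imports it from Saloff-Coste in the proof of Proposition~\ref{prop:Q2}(i): $\infn{Q_n}\le\frac{|\mS|}{\lambda(P_n)}\{\log_+\log(1/\overline\pi_n)\cdot\frac{\log((1-\overline\pi_n)/\overline\pi_n)}{1-2\overline\pi_n}+e\}$, with $\overline\pi_n=\min_x\pi_n(x)\gtrsim n^{-A\max V}$. This is where the logarithms hidden in $\widetilde O$ come from, and why they disappear when $V\equiv 0$.

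A self-contained substitute is the following. Write $Q=\int_0^\infty(e^{-t(I-P)}-\Pi)\,dt$. The density $e^{-t(I-P)}(x,\cdot)/\pi$ evolves under $e^{-t(I-P^*)}$, which has the same Dirichlet form. Hence $\|e^{-t(I-P)}(x,\cdot)-\pi\|_{1}\le\min\{2,\pi(x)^{-1/2}e^{-\lambda t}\}$, and integrating gives $\infn{Q}\le\lambda^{-1}(2+\log(1/\min_x\pi(x)))$.

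A minor correction as well. With your resolvent identity, $\sum_{i\le n}\infn{u_{i+1}-u_i}$ is in general only $\widetilde O(n^{\max\{A\ec,\,2A\ec-\alpha\}})$, with $\alpha$ as in Proposition~\ref{prop:Q2}. This exceeds $n^{A\ec}$ when $A\ec>\alpha$. It is still negligible after dividing by $n$, because $2A\ec-1<A\ec-\frac12$; this is exactly how the paper handles $U_n^3$.
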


\vspace{2ex}

Let us observe that, in this general framework, the best possible rate provided by our result is obtained when $A=\Acrit$, yielding
$$\EE(\infn{v_n -\pi^*})=\widetilde O \left (
n^{-\mu}\right), \quad \text{with} \quad \mu=\frac{\min\{\theta,1\}}{2(\ec+\min\{\theta,1\})}.$$

The next corollary identifies simple cases where the convergence rate from Theorem~\ref{thm:main} admits a simpler expression.

\vspace{3ex}
\begin{corollary}
    Under the assumptions of Theorem~\ref{thm:main}, the following holds.
    \begin{itemize}\setlength\itemsep{0.7em}
    \item[$(i)$] If $V \equiv 0$, then the logarithmic terms in \eqref{eq:rate} can be avoided.
    \item[$(ii)$] If the cost function $\co$ takes integer values, then $\theta \geq 1$, and $\Acrit = \frac{1}{2(\ec + 1)}$. Therefore, \eqref{eq:rate} can be written as
\begin{equation*} 
    \EE(\infn{v_n -\pi^*}) = \begin{cases}
\widetilde O\left(\dfrac{1}{n^{A}}\right)  & \text{ if } \; 0< A \leq \Acrit,\\[0.25cm]
\widetilde O\left(\dfrac{1}{n^{1/2 -A \ec}}\right) & \text{ if } \; \Acrit < A < \frac{1}{2\ec}.
\end{cases}\end{equation*}
\end{itemize}
\end{corollary}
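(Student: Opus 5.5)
Part $(ii)$ follows from Theorem~\ref{thm:main} by a short argument on tree costs; part $(i)$ requires going back into the proof of Theorem~\ref{thm:main} and locating where the polylogarithmic factors hidden in $\widetilde O$ come from.

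\emph{Part $(ii)$.} If $\co$ takes integer values on finite-cost edges, then every tree $g\in\mG$ has integer aggregate cost $\co(g)=\sum_{e\in g}\co(e)$. Hence $\min_{g\in\mG^1}\co(g)$ and $\co_0$ are both integers. By the definition of $\mG^0$ in \eqref{eq:g0}, every $g\in\mG^1$ satisfies $\co(g)>\co_0$, so the difference $\theta$ in \eqref{eq:theta} is a positive integer, i.e.\ $\theta\ge 1$. In the degenerate case $\mG^1=\emptyset$ we have $\theta=+\infty$ by convention. Either way $\min\{\theta,1\}=1$. Substituting this into the definition of $\Acrit$ gives $\Acrit=\frac{1}{2(\ec+1)}$, and the exponent $A\min\{\theta,1\}$ in \eqref{eq:rate} becomes $A$, which is the displayed formula.

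\emph{Part $(i)$.} I expect the logarithms to enter through two sources.
\begin{itemize}
\item The first is the dependence of the invariant measures on $\ve$. By the Freidlin--Wentzell tree formula, $\pi_\ve(x)\asymp \ve^{V(x)}$, so constants such as $\min_x\pi_{\ve_n}(x)$, ratios $\pi_{\ve_n}(x)/\pi_{\ve_n}(y)$, or $\log(1/\pi_{\ve_n,\min})\sim \max V\cdot A\log n$ appear in the estimates.
\item The second is the spectral-gap bound for the additive symmetrization of $P_{\ve_n}$. Its canonical-path (Poincaré) constant involves weights of the form $\ve^{W(e)-V(x)-V(y)}$ together with the stationary masses.
\end{itemize}
When $V\equiv 0$, all $\pi_\ve(x)$ are bounded below by a positive constant uniformly in $\ve\in(0,1]$. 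Then:
\begin{itemize}
\item the quantities $\log(1/\pi_{\ve_n,\min})$ become $O(1)$;
\item the distortion between $\ell^2(\pi_{\ve_n})$ and $\ell^1$ norms is uniformly bounded;
\item the Poincaré constant is $O(\ve_n^{-\ec})$ with no extra $\log(1/\ve_n)$ factor.
\end{itemize}

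Following the proof of Theorem~\ref{thm:main} step by step, I would re-derive three bounds with these $\ve$-independent constants:
\begin{enumerate}
\item the bound on the Poisson-equation solutions $\infn{(I-P_{\ve_n}+\Pi_{\ve_n})^{-1}}=O(\ve_n^{-\ec})$;
\item the bound on their increments in $n$, controlled via the $O(n^{-(1+A)})$ regularity of $(\ve_n)_n$ and the Lipschitz functions $h_{x,y}$;
\item the bias term $\infn{\pi_{\ve_n}-\pi^*}=O(\ve_n^{\min\{\theta,1\}})$.
\end{enumerate}
With these in hand, the martingale and remainder terms sum to exactly $O(n^{-A\min\{\theta,1\}})$ or $O(n^{-(1/2-A\ec)})$, with no logarithmic factors.

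The main obstacle is making sure that no logarithm is produced elsewhere. The candidate is the summation $\sum_k k^{-1}\ve_k^{-\ec}$ in the remainder terms. At the boundary exponent it could generate a $\log n$, which is exactly the case $A=\Acrit$. I would check that this sum is a pure power except at the critical value, and would treat $A=\Acrit$ with a strict inequality argument or accept it as the borderline case.
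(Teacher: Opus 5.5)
Your proposal is correct and follows the paper's route. For (ii), you use exactly the integrality observation behind the last claim of Proposition~\ref{prop:tech}. For (i), the paper's only logarithms come from the factor $\log_+\log(1/\overline\pi_n)\cdot\log(1/\overline\pi_n)$ in the Saloff-Coste bound on $\infn{Q_n}$ (Proposition~\ref{prop:Q2}$(i)$, which then feeds into $(iv)$ and into $\varphi(n)$), and this factor is bounded once $V\equiv 0$ gives $\overline\pi_n\geq \underline{K}$. Your second ``source'' is not one: the spectral gap bound of Theorem~\ref{thm:spectral} is log-free for every $V$.

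Your worry about $A=\Acrit$ is misplaced, since there the two power rates simply coincide and no logarithm appears. The only harmonic sums that can produce a $\log n$ are $\frac1n\sum_{i<n} i^{-A\min\{\theta,1\}}$ when $A\min\{\theta,1\}=1$ (which forces $A>\Acrit$), and the $U_n^3$ sum when $\alpha=2A\ec$. In both cases $\log n/n$ is dominated by $n^{-(1/2-A\ec)}$, so no borderline exception is needed.
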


\section{Examples} \label{sec:examples}
\begin{example} \label{ex:1} Let $N \geq 1$ and $\mS=\{-N,...,0,...,N\}$. For $a, b>0$, consider the homogeneous model of evolution $(P_\ve)_\ve$ with off-diagonal elements given by

\[
\begin{cases}
P_\ve(x,x+1)=P_\ve(-x,-x-1)=\ve^a, & \,\, x=0,\ldots, N-1,\\
P_\ve(x,x-1)=P_\ve(-x,-x+1)=\ve^b, & \,\, x=1,\ldots, N,\\
P_\ve(x,y)=0,& \text{otherwise},\\
\end{cases}
\]
where we assume for simplicity, that $P_\ve$ is a stochastic matrix for all $\ve$ sufficiently small. The admissible cost function $\co$ associated to $(P_\ve)_\ve$, depicted in Figure~\ref{fig:ex}, is then given by
\begin{itemize}\setlength\itemsep{0.7em}
\item $\co(x,x+1) = \co(-x,-x-1) = a$ \, for $x=0,...,N-1$. 
\item $\co(x+1,x) = \co(-x-1,-x) = b$ \, for $x=0,...,N-1$. 
\item $\co(x,x)=0$ for all $x \in \mS$.
\item $\co(\cdot,\cdot) = + \infty$ elsewhere.
\end{itemize} 
\begin{figure}[ht!] 
\centering
\begin{tikzpicture}[thick,scale=1.2,-,shorten >= 1pt,shorten <= 1pt,every node/.style={scale=0.9,inner sep=0pt,minimum size=1.1cm}]
\begin{scope}[start chain=going right, node distance=1.1cm]
\node[on chain,draw,circle] (-N)  {\small $-N$};
\node[on chain,draw, circle] (-x-1)  {\small$-x-1$};
 \node[on chain,draw, circle] (-x)  {\small$-x$};
\node[on chain,draw,circle] (0) {\small$0$};
\node[on chain,draw, circle] (x)  {\small$x$};
 \node[on chain,draw, circle] (x+1)  {\small$x+1$};
  \node[on chain,draw, circle] (N)  {\small$N$};
\end{scope}
\path (-N) -- node[auto=false]{\ldots} (-x-1);
\path (-x) -- node[auto=false]{\ldots} (0);
\path (0) -- node[auto=false]{\ldots} (x);
\path (x+1) -- node[auto=false]{\ldots} (N);
\path[every node/.style={font=\small},->] (-x-1) edge [bend left = 35]   node [fill=white] {\small $b$}  (-x) ;
\path[every node/.style={font=\small},->] (-x) edge [bend left]   node  [fill=white]  {\small $a$} (-x-1) ;
\path[every node/.style={font=\small},->] (x+1) edge [bend right = 35]   node [fill=white] {\small $b$}  (x) ;
\path[every node/.style={font=\small},->] (x) edge [bend right = 35]   node [fill=white] {\small $a$}  (x+1) ;
\path[every node/.style={font=\small},->] (-N) edge [out=120,in=60,looseness=7] node[fill=white] {\small $0$} (-N);
\path[every node/.style={font=\small},->] (-x-1) edge [out=120,in=60,looseness=7] node[fill=white] {\small $0$} (-x-1);
\path[every node/.style={font=\small},->] (-x) edge [out=120,in=60,looseness=7] node[fill=white] {\small $0$} (-x);
\path[every node/.style={font=\small},->] (0) edge [out=120,in=60,looseness=7] node[fill=white] {\small $0$} (0);
\path[every node/.style={font=\small},->] (x) edge [out=120,in=60,looseness=7] node[fill=white] {\small $0$} (x);
\path[every node/.style={font=\small},->] (x+1) edge [out=120,in=60,looseness=7] node[fill=white] {\small $0$} (x+1);
\path[every node/.style={font=\small},->] (N) edge [out=120,in=60,looseness=7] node[fill=white] {\small $0$} (N);
\end{tikzpicture}
\caption{\label{fig:ex}
 The admissible cost function associated to Example~\ref{ex:1}}
\end{figure}
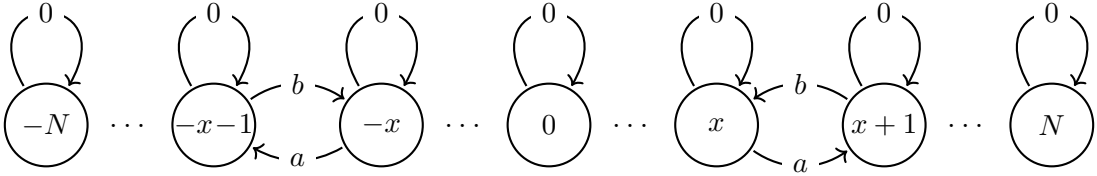

\noindent In this case, the limit matrix $\Plim$ is the identity and thus any distribution is invariant for $\Plim$. For this model, the energy barrier is given explicitly by (see Appendix~\ref{app:energy_example} for the computations)
\begin{equation} \label{eq:energy_ex1}
\ec=\begin{cases}
\,\,b & \text{ if } a \geq b, \\
(b-a)N+a  & \text{ if } a<b. \\
\end{cases}
\end{equation}

For $\ve>0$, the unique invariant probability distribution of the matrix $P_\ve$ is symmetric with respect to zero where, up to normalization,
\begin{equation*}
    \pi_\ve(0)=\ve ^{N(b-a)},\quad \text{and}\quad  \pi_\ve(k)= \ve^{(N-k)(b-a)},
\end{equation*}
for  $k=1,\ldots, N$. Hence
\[
\lim_{\ve \to 0} \pi_\ve=\pi^* :=
\begin{cases}
 \left (0,\ldots,0, 1,0,\ldots, 0 \right ) & \text{if} \quad a>b,\\[1ex]
     \dfrac{1}{2N+1}\left (1,\ldots,1 \right ) & \text{if} \quad a=b,\\[1ex]
    \,\,\left (1/2,0,\ldots, 0,1/2 \right ) & \text{if} \quad a<b.\\
\end{cases}
\]
\vspace{2ex}

Let us compute the tree-optimality gap $\theta$ by noting that there is a unique $x$-tree for every $x \in \mS$.

\noindent \textsc{Case} $a > b$.
 The set $\mG^0$ only contains the $0$-tree, and $\co_0 = 2Nb$. The set $\mG^1$ contains the unique $x$-tree, for $x \neq 0$. The least costly are the $1$-tree and the $-1$-tree, with associated cost $(2N-1)b +a$. Hence $\theta = a -b$.

\noindent\textsc{Case} $a < b$. The set $\mG^0$ only contains the $N$-tree and the $-N$-tree, and $\co_0 = N(a+b)$. The set $\mG^1$ contains the unique $x$-tree, for $x \neq -N,N$. The least costly elements of $\mG^1$ are the $(N-1)$-tree and the $(-N+1)$-tree, with associated cost $(N+1)b +(N-1)a$. Hence $\theta = b -a$. 

\noindent \textsc{Case} $a = b$. In that case, every tree has the same cost, so $\mG^1 = \emptyset$ and $\theta=+\infty$. 

Let us assume, for the sake of the example, that $a=1>b $ thus $\ec=b$ and $\theta=1-b$. Taking $\ve_n=n^{-A}$, we have that for all $A<(2b)^{-1}$, $v_n$ converges almost surely to $\delta_{0}$ and the best rate of convergence implied by Theorem~\ref{thm:main} is $\widetilde O(n^{-(1-b)/2})$, which  worsens arbitrarily as $b$ approaches one. On the other hand, when $b=1$ and setting $A=\Acrit=1/4$, $v_n$ goes to the uniform distribution with a rate of $\widetilde O(n^{-1/4})$.
\end{example}

\begin{example}\label{ex:pak}
This example, taken from \citep{Pak08}, serves both as a point of comparison with the literature on inhomogeneous models (see Section~\ref{sec:models}) and as an illustration of a case where the limiting kernel $\Plim$ contains periodic classes. Two players play the two-action coordination game presented in Figure~\ref{fig:pak_game} below, repeated in discrete time. Actions for players are denoted $\{\text{T},\text{B}\}$ and $\{\text{L},\text{R}\}$ for Player 1 and Player 2, respectively.

The model introduces experimentation to the purely myopic Best-Response in the following manner. For $0<\ve\leq1$, if a player receives a larger or equal payoff than her opponent at some time, then she experiments with probability $\ve^2$ (by using the other action). On the contrary if she receives less, then she applies experimentation with probability $\ve$. The evolution of the action profile defines an ergodic Markov chain on the set of pure strategy profiles $\mS= \{\text{TL}, \text{TR}, \text{BL}, \text{BR} \}$ whose transition matrix, up to row normalization and negligible terms, is given by
\[P_\ve=
\begin{pmatrix}
1 & \ve & \ve^2& \ve^3\\
\ve^2 &\ve^4 & 1 & \ve^2 \\
\ve^2 &1 &\ve^4 &\ve^2  \\
\ve^3 &\ve^2&\ve& 1
\end{pmatrix}, \quad \text{with}\quad\Plim=
\begin{pmatrix}\,1 \, & \,0 \, & \,0 \, & \,0 \,\\
\,0 \, &\,0 \, & \,1 \, & \,0 \, \\
\,0 \, &\,1 \, &\,0 \, & \,0 \, \\
\,0 \, &\,0 \, &\,0 \,& \,1 \, 
\end{pmatrix}.
\]

The induced graph with the admissible cost $\co$ is presented in Figure~\ref{fig:pak_graph}.

 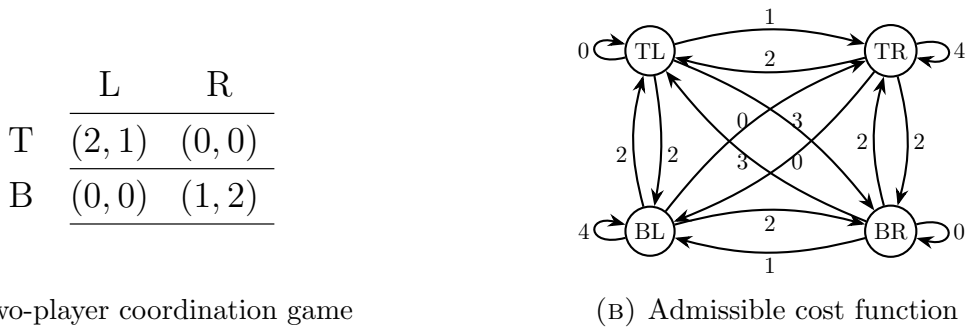
\begin{figure}[ht!]
\centering
\begin{subfigure}{0.4\textwidth}
\begin{minipage}[t]{\linewidth}
\centering
\scalebox{1}{%
\renewcommand{\arraystretch}{1.3}
\begin{tabular}{c@{\hspace{1em}}cc}
& L & R \\
\cline{2-3}
T & $(2, 1)$ & $(0, 0)$ \\
\cline{2-3}
B & $(0, 0)$ & $(2, 1)$ \\
\cline{2-3}\\
\end{tabular}
}
\end{minipage}
\caption{Two-player coordination game}
\label{fig:pak_game}
\end{subfigure}
\hfill
\begin{subfigure}{0.58\textwidth}
\centering
\begin{minipage}[t]{\linewidth}
\centering
\scalebox{1}{%
\begin{tikzpicture}[
  ->,
  >=Stealth,
  thick,
  scale=0.9,
  transform shape,
  inner sep=2.5pt,
  font=\footnotesize
]
  \node[circle, draw] (TL) at (0,0) {\small TL};
  \node[circle, draw] (TR) at (4,0) {\small TR};
  \node[circle, draw] (BL) at (0,-3) {\small BL};
  \node[circle, draw] (BR) at (4,-3) {\small BR};

  \draw[->] (TL) to[bend left=15] node[above] {\small $1$} (TR);
  \draw[->] (TL) to[bend left=10] node[below right] {\small $2$} (BL);
  \draw[->] (TL) to[bend left=15] node[right] {\small $3$} (BR);
  \draw[->] (TL) to[loop left] node[left] {\small $0$} (TL);

  \draw[->] (TR) to[bend left=15] node[above] {\small $2$} (TL);
  \draw[->] (TR) to[loop right] node[right] {\small $ 4$} (TR);
  \draw[->] (TR) to[bend left=15] node[right] {\small $ 0$} (BL);
  \draw[->] (TR) to[bend left=15] node[right] {\small $ 2$} (BR);

  \draw[->] (BL) to[bend left=15] node[below] {\small $ 2$} (BR);
  \draw[->] (BL) to[loop left] node[left] {\small $ 4$} (BL);
  \draw[->] (BL) to[bend left=15] node[left] {\small $ 0$} (TR);
  \draw[->] (BL) to[bend left=15] node[below left] {\small $ 2$} (TL);

  \draw[->] (BR) to[bend left=15] node[left] {\small $ 3$} (TL);
  \draw[->] (BR) to[bend left=15] node[left] {\small $ 2$} (TR);
  \draw[->] (BR) to[bend left=15] node[below] {\small $ 1$} (BL);
  \draw[->] (BR) to[loop right] node[right] {\small $ 0$} (BR);
\end{tikzpicture}
}
\end{minipage}
\caption{Admissible cost function}
\label{fig:pak_graph}
\end{subfigure}
\caption{Game and cost for Example~\ref{ex:pak}}
\end{figure}

Notice that $\Plim$ contains the periodic recurrent class $\{\text{TR},\text{BL} \}$ and that the invariant distribution of $P_\ve$ is, up to normalization, 
\[
\pi_{\ve}=\begin{pmatrix}\ve, & (1+ \ve^2)/2, &(1+ \ve^2)/2, &   \ve \end{pmatrix}, \quad \text{with} \quad \lim_{\ve \to 0}\pi_{\ve}= \pi^*=\left (0,\, 1/2,\, 1/2,\, 0 \right ).
\]

Consequently, $\pi^*$ puts weight exclusively on action profiles with zero payoff, explained by the fact that players experiment at a higher rates when actions mismatch. 

In this case, routine computations show that $\tV(\text{TL}) = \tV(\text{BR})=3$ and $\tV(\text{TR})=\tV(\text{BL})=2$, so that $V(\text{TL})=V(\text{BR})=1$, $V(\text{BL})=V(\text{TR})=0$. Thus, the energy barrier is $\ec =\Elev(\text{TL},\text{TR}) - V(\text{TL}) - V(\text{TR})= 1$, $\co_0=2$, and $\theta=1$. Taking the mutation parameter as $\ve_n= 
n^{-A}$, Theorem~\ref{thm:main} states that the occupation measure $v_n$ converges almost surely to $\pi^*$ for all $A<1/2$ and that it does so with the rate $ \EE(\infn{v_n -\pi^*})=\widetilde O( n^{-\min\{1/2-A,A\}}).$
\end{example}

\begin{example}\label{ex:Dietz} For $N \geq 2$, consider the simple model of evolution on $\mS=\{1,...,N\}$, given by
\[
P_\ve = \begin{cases}1- \ve\sum_{y \neq x}k(x,y) & \text{if  } x=y\\[0.5ex] \,\,\,\,\ve\, k(x,y) & \text{otherwise,} \end{cases}
\]
where for all $x \neq y$, $k(x,y)$ is a strictly positive number. The admissible cost $\co$ is $\co(x,y)=1$ if $x \neq y$ and $\co(x,x)=0$ and we assume for simplicity that $P_\ve$ is a stochastic matrix for all $0<\ve \leq 1$. Observe that in this case $\Plim=I$ and let us take again $\ve_n=n^{-A}$ for some $A>0$.  In this case, we have that for all $x \in \mS$ every $x$-tree in $\mG_x$ has cost $N-1$, so $\tV(x)=N-1$ and then $V \equiv 0$ and $\theta=+\infty$. Furthermore, $W(e)=1$ for all arc $e$ and therefore $\ec=1$. Thus, our result prescribes almost sure convergence of the occupation measure towards $\pi^*$ for $A<1/2$, independent of $N$, and that $\EE(\infn{v_n-\pi^*})=O(n^{-(1/2-A)})$.
 \end{example}

The inhomogeneous Markov chain induced by the model in Example~\ref{ex:Dietz} above, when $\ve_n$ is exactly $n^{-A}$, corresponds to the one studied in \citep{DieSet07}. The authors characterize (almost) completely the asymptotic behavior of the occupation measure $v_n$. The main result states that if $A=1$, then $v_n$ converges in distribution to a measure with full support on $\Delta(\mS)$, with explicit characterization of said distributions in some cases. It is worth noting that \citep{gantert} was the first to prove that when $A=1$, $v_n$ cannot converge in probability to a constant vector. If $A >1$, $v_n$ converges in distribution to a vector that depends on the initial condition. When $0<A<1/2$, $v_n \to \pi^*$ almost surely and the convergence is in probability for $1/2\leq A<1$. The authors conjecture, relying on numerical experiments,  that the latter might also hold in the almost sure sense,  noting that the rate degenerates when $A \sim 1$; the question was left open. A proof in the case $N=2$ was obtained later in \citep{EngVol18}. For $N>2$, the question was answered in the positive in \citep{BourCloez18}, while studying a related class of inhomogeneous Markov chains (see Example~\ref{ex:cloez} for more details).

Interestingly, given the particular structure in the previous example, a minor adjustment of the argument to obtain Theorem~\ref{thm:main} allows us to give an alternative proof for convergence when $0<A<1$ with rate $O(1/n^{(1-A)/2})$, explaining how the convergence rate deteriorates as $A$ approaches one (see Proposition~\ref{prop:cloez}). The mentioned adjustment relies on two simple facts. First, the invariant distribution of $P_{\varepsilon_n}$ does not depend on $n$. Second, as the model suggest, $P_\ve$ approaches the identity as $\infn{I-P_\ve}=O(\ve)$, thus the bounds in our proof can be a lot more precise.

The following inhomogeneous Markov chain, studied in \citep{BourCloez18}, provides an example that does not strictly fit into the framework of an inhomogeneous model of evolution. Nevertheless, it can still be treated within our analysis (cf. Remark~\ref{rem:technical}).  

\vspace{2ex}
\begin{example}\label{ex:cloez}
Consider the inhomogeneous Markov chain with transitions
\[
\widetilde P_n(x,y)= \ve_n(k(x,y) + \err_n(x,y)),\,\, x\neq y.
\]
For $x \neq y$, we assume that $k(x,y) \geq 0$, that $\err_n(x,y) \to 0$, and that $(\ve_n)n$ is a strictly positive, decreasing sequence converging to some $\ve \in [0,1]$, with $\sum_{n \geq 1} \ve_n = +\infty$. Also, $I+k$ is stochastic and irreducible, where $k (x,x):=-\sum_{x\neq y}k(x,y)$, so that it admits a unique invariant distribution $\pi^*$. Observe that $\widetilde \Plim:=\lim_{n\to +\infty} \widetilde P_n= I+\ve k$, and that $\pi^*$ is also an invariant distribution for $\widetilde\Plim$, which is unique whenever $\ve >0$. Relying on a comparison with a suitable continuous-time Markov process, \citep{BourCloez18} obtain, among other results, that when $\sum_{n \geq 1}(\ve_n n^2)^{-1}<+\infty$, then $v_n$ converges almost surely to $\pi^*$, and an almost sure convergence rate for $\infn{v_n -\pi^*}$ is established particularly in the case where $|\err_n(x,y)|= O(n^{-B})$, $0<B\leq 1$. As an illustration, if $\ve=0$ and we take $\ve_n=n^{-A}$, \citep[Theorem 2.12]{BourCloez18} shows that almost sure convergence holds for $0<A<1$. Note that, when $k(x,y)>0$ for all $x \neq y$, $\ve=0$, and $\err_n\equiv 0$, this proves the conjecture discussed in Example~\ref{ex:Dietz}.
\end{example}

The following proposition, whose proof is presented in Appendix~\ref{app:examples}, shows how the previous results can be recovered within the framework of our analysis. For the sake of simplicity, we only treat the case $\ve = 0$ in Example~\ref{ex:cloez}.
\begin{proposition}\label{prop:cloez}
    Consider the inhomogeneous Markov chain in Example~\ref{ex:cloez} with $\ve=0$. Let $\err_n:=\max_{x\neq y}|\err_n(x,y)|$, $\Err_n:=\frac{1}{n}\sum_{i=1}^n \err_i$, and assume that $(\ve_n)_n$ is $A$-vanishing for $0<A<1$. Then $v_n$ converges almost surely to $\pi^*$ and, moreover,
        \begin{equation*}
    \EE(\infn{v_n -\pi^*}) =  O\left ( n^{-(1-A)/2} +  \Err_n\right ).    
\end{equation*}
 In particular:
 \begin{itemize}\setlength\itemsep{0.6em}
     \item[$(a)$] If $\sum_{n\geq 1}\err_n <\infty$, then  $\EE(\infn{v_n -\pi^*})= O  (n^{-(1-A)/2}  )$, as in the case of Example~\ref{ex:Dietz}.
     \item[$(b)$] If $\sum_{n\geq 1}\err_n =+\infty$ with $\err_n= O( n^{-B})$, $0<B\leq 1$, then $$\EE(\infn{v_n -\pi^*}) =  O\left ( n^{-\mu}\right ),\quad \text{where}\quad  \mu=\min \left\{(1-A)/2,B\right\}.$$
 \end{itemize}
\end{proposition}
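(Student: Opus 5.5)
We use the standard Poisson-equation (martingale) decomposition of the occupation measure, with the bounds made explicit using the structure $\widetilde P_n = I + \ve_n (k + \err_n)$. Write $L_n := \widetilde P_n - I = \ve_n(k + \err_n)$, with the convention $\err_n(x,x) := -\sum_{y\neq x}\err_n(x,y)$. Fix a test function $f\in\RR^{\mS}$ with $\infn{f}\le 1$ and $\pi^* f = 0$. Since $k$ is an irreducible generator, the Poisson equation $-k u = f$ has a unique solution $u$ with $\pi^* u = 0$ and $\infn{u}\le C$. A key simplification is that $\pi^*$, and hence $u$, do not depend on $n$; this is the first of the two facts singled out after Example~\ref{ex:Dietz}.

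Rewrite $f = -k u$ as
\[
f = -\frac{1}{\ve_n}(\widetilde P_n - I)u + \err_n u
\]
(the last term meaning the matrix $\err_n$ applied to $u$). Evaluating at $X_n$ and summing over $n=1,\dots,N$ gives
\[
N\, v_N f = \sum_{n=1}^N \frac{u(X_n) - \widetilde P_n u(X_n)}{\ve_n} + \sum_{n=1}^N (\err_n u)(X_n).
\]
The last sum is bounded by $C\sum_{n\le N}\err_n = C N \Err_N$, which produces the $\Err_n$ term. For the first sum, write $u(X_n) - \widetilde P_n u(X_n) = u(X_n) - u(X_{n+1}) + M_{n+1}$ with $M_{n+1} := u(X_{n+1}) - \widetilde P_n u(X_n)$, a martingale increment.

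We control the three resulting pieces in turn.

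\begin{itemize}
\item[(i)] \emph{Summation by parts.} We have
\[
\sum_{n\le N}\frac{u(X_n)-u(X_{n+1})}{\ve_n} = \frac{u(X_1)}{\ve_1} - \frac{u(X_{N+1})}{\ve_N} + \sum_{n=2}^{N} u(X_n)\Bigl(\frac{1}{\ve_n} - \frac{1}{\ve_{n-1}}\Bigr).
\]
By $A$-vanishing, $\ve_n^{-1} = O(n^A)$ and $|\ve_n^{-1} - \ve_{n-1}^{-1}| \le |\ve_n - \ve_{n-1}|/(\ve_n\ve_{n-1}) = O(n^{-1-A}n^{2A}) = O(n^{A-1})$. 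Hence this piece is $O(N^A)$.

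\item[(ii)] \emph{Martingale part.} The martingale $\sum_{n\le N} M_{n+1}/\ve_n$ is handled by the second fact, $\infn{I - \widetilde P_n} = O(\ve_n)$. This gives
\[
\EE\bigl(M_{n+1}^2 \mid \mathcal F_n\bigr) \le \sum_y \widetilde P_n(X_n,y)\bigl(u(y) - u(X_n)\bigr)^2 \le 4C^2\, \PP(X_{n+1}\ne X_n \mid \mathcal F_n) = O(\ve_n).
\]
Therefore $\EE\bigl(\sum_{n\le N} M_{n+1}/\ve_n\bigr)^2 = O\bigl(\sum_{n\le N}\ve_n^{-1}\bigr) = O(N^{1+A})$. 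This gain of a factor $\ve_n$ in the variance is the crux: with the generic bound $O(1)$ one would only get $A<1/2$.
\end{itemize}

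Dividing by $N$ and applying Cauchy--Schwarz, then taking the maximum over the finitely many $f = \delta_x - \pi^*(x)$, yields
\[
\EE\infn{v_N - \pi^*} = O\bigl(N^{A-1} + N^{(A-1)/2} + \Err_N\bigr) = O\bigl(N^{-(1-A)/2} + \Err_N\bigr).
\]
Cases $(a)$ and $(b)$ are then direct: if $\sum_n \err_n < \infty$ then $\Err_N = O(1/N)$, which is dominated; if $\err_n = O(n^{-B})$ with $B\le 1$, then $\Err_N = O(N^{-B}\log N)$. The logarithm only appears when $B=1$, and a slightly finer treatment (or absorbing it when $B < (1-A)/2$) gives the stated $\mu$.

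\textbf{Almost sure convergence.} By (i) and since $\Err_N \to 0$ (Cesàro, as $\err_n \to 0$), it remains to show $\frac1N\sum_{n\le N} M_{n+1}/\ve_n \to 0$ almost surely. The martingale $\sum_n M_{n+1}/(n\ve_n)$ has
\[
\sum_n \EE\bigl(M_{n+1}^2\bigr)/(n\ve_n)^2 = O\Bigl(\sum_n n^{A-2}\Bigr) < \infty
\]
because $A<1$, so it converges almost surely, and Kronecker's lemma concludes. We expect the only delicate step to be verifying that the perturbation $\err_n$ does not spoil the conditional variance bound. It does not: $\widetilde P_n(x,y) = \ve_n(k(x,y) + \err_n(x,y))$ remains $O(\ve_n)$ since $\err_n \to 0$.
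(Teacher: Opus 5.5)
Your argument is correct, but it takes a different route from the paper's.

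\textbf{The paper's route.} The paper pushes the chain through the machinery of Section~\ref{sec:proof}. It works with the time-dependent pseudo-inverses $\widetilde Q_n$ of $\widetilde P_n$ and the invariant laws $\widetilde\pi_n$. It therefore has to compare the Dirichlet forms of $\widetilde P_n$ and $I+\ve_n k$ to get $\lambda(\widetilde P_n)\ge K\ve_n$, and show $\infn{\widetilde\pi_n-\pi^*}\le K\err_n$; this is where $\Err_n$ enters, through $U_n^0$. It also has to bound $\infn{\widetilde P_{n+1}-\widetilde P_n}$ and $\infn{\widetilde\pi_{n+1}-\widetilde\pi_n}$ to control the drift term $U_n^3$.

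\textbf{Your route.} You solve a single, time-independent Poisson equation $-ku=f$ for the limiting generator. You then use the exact identity $\widetilde P_n-I=\ve_n(k+\err_n)$, so the perturbation appears only as the bias $\err_n u$. This removes the need for spectral-gap perturbation and for $\widetilde\pi_n$ altogether.

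\textbf{Where the two coincide.} In the unperturbed case (Example~\ref{ex:Dietz}) the two arguments are the same. There $Q_n=\ve_n^{-1}G$, with $G$ the pseudo-inverse of $-k$. Your summation-by-parts term is the paper's $U_n^2+U_n^3$, and your $O(\ve_n)$ conditional variance is the paper's use of $\infn{I-P_\ve}=O(\ve)$.

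\textbf{What your route buys.} It is more robust in the perturbed case. You only use $\err_n\to0$ and the fact that $\widetilde P_n(x,y)=O(\ve_n)$ off the diagonal. The paper's bound $\infn{\widetilde P_{n+1}-\widetilde P_n}\le Kn^{-(1+A)}$ involves the term $\ve_n(\err_{n+1}(x,y)-\err_n(x,y))$, which the stated hypotheses do not control.

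\textbf{What the paper's route buys.} It stays uniform with Theorem~\ref{thm:main}. Your trick relies on $P_\ve$ being affine in $\ve$ with a fixed generator. It does not carry over to general models of evolution, where the Poisson solution depends nonlinearly on $\ve$.

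\textbf{A small cleanup in part $(b)$.} A logarithm arises only when $B=1$. In that case $\mu=(1-A)/2<1$, so $\log N/N=O(N^{-(1-A)/2})$ and no finer treatment is needed. For $B<1$ you get $\Err_N=O(N^{-B})$ directly, with no logarithm.
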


\begin{remark}\label{rem:ex1}
In Example~\ref{ex:1}, it is also possible to show that convergence holds for a wider range of mutation rates. In that case,  $\infn{I-P_\ve}=O(\ve^{\min\{a,b\}})$ and consequently the almost sure convergence result can be strengthened to $A$-vanishing rates where $A(2\ec -\min\{a,b\}) <1$. For instance, when $0<b<a$, $v_n \to \pi^*$ for all $0<A< b^{-1}$ instead of $0<A< (2b)^{-1}$. More generally, this type of improved result can be obtained   for any model with $\Plim=I$ since in that case $\infn{I-P_\ve}=O(\ve^{ \underline{c}})$, where $\underline{c}$ is the smallest (finite) nonzero cost.
\end{remark}
\section{Related work on inhomogeneous models} \label{sec:related}

\subsection{Coradius and energy barrier} Let $\co: \mS \times \mS \rightarrow [0,+\infty]$ be an admissible cost function. Given $\mA,\mB$ two subsets of $\mS$, $\Gamma_{\mA,\mB}$ denotes the set of paths from $\mA$ to $\mB$, and define the resistance from $x$ to $\mA$ as the minimum cost to travel from $x$ to $\mA$,  i.e.
\[r(x,\mA) := \min \{\co(\gamma): \; \, \gamma \in \Gamma_{x,\mA}\} < + \infty.\]
The coradius of a set $\mA$ is defined as the maximal resistance from $\mS\setminus \mA$ to $\mA$, that is
\[\CR(\mA) := \max_{x \in \mS\setminus \mA} r(x,\mA).\]
\indent Given a model $(P_\ve)_{\ve}$, consider the time homogeneous Markov chain with transition matrix $\Plim=\lim_{\ve \to 0}P_\ve$ and let $\mT  \cup \mC_1 \cup \cdots \cup \mC_m =\mS$
be the standard decomposition of the state space $\mS$, where $\mT$ is the set of transient states and $\mC_i$, $i= 1,\ldots, m$, are the recurrent classes. Observe that for any homogeneous model, $\CR(x)= \CR(\mC_i)$ for all states belonging to the recurrent class $\mC_i$. Denote $\mC:= \cup_{i=1}^m\mC_i$ and define $\mCR(\co)$ as the minimum coradius over $\mC$

\[\mCR(\co):= \min_{x \in \mC} \CR(x).\]

 The quantity $\mCR(\co)$ has been widely used to study stochastic stability results for homogeneous evolution models. For instance, \citep[Theorem 1]{Ell00} shows that if a set $\mD\subseteq \mS$ is such that $\CR(\mD)<\CR(\mS\setminus \mD)$, then the set of stochastically stable states is contained in $\mD$. Also, for every state $z \notin \mD$, a bound of $O(\ve^{-\mCR(\co)})$ is established for the expected time to hit $\mD$. In fact, this can be improved considering a modified version of the coradius (see \citep[Theorem 2]{Ell00}).

\begin{remark}
In Example~\ref{ex:1}, we have $\CR(x) = bN + a|x|$ for all  $x \in \mS$,
and therefore $\mCR(\co)=\CR(0) = b N$, so $\ec\leq \mCR(\co)$ (see \eqref{eq:energy_ex1}) with equality if and only if $a \geq b$ and $N=1$. On the other hand, if $a \geq b$ and $N$ is large, one has that $\ec=b << \mCR({\co})$.  
\end{remark}

The following result, whose proof can be found in Appendix~\ref{app:thm:comparison}, shows that the comparison between $\mCR(\co)$ and  $\mathbf{e(c)}$ holds for any evolution model given by Definition~\ref{def:hom_ev}.

\begin{theorem}\label{thm:comparison} Consider a homogeneous model of evolution $(P_\ve)_{\ve \in (0,1]}$ with admissible cost function $
\co:\mS\times \mS \to [0,\infty]$. Let $\mC$ be the union of the recurrent classes induced by the limit kernel $\Plim$. Then $\ec \leq \CR(z)$ for all $z \in \mC$. Consequently $\ec \leq \mCR(\co)$.
\end{theorem}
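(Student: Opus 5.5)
The plan is to reduce the statement to a one-point estimate on elevations through the state $z$, and then prove that estimate with the classical tree-surgery inequality for the quasi-potential. Concretely, fix $z\in\mC$. I would show that for every $x\in\mS$,
\begin{equation}\label{eq:planclaim}
\Elev(x,z)\le V(x)+\CR(z).
\end{equation}

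Assuming \eqref{eq:planclaim}, the comparison follows in three steps.
\begin{itemize}
\item[$(i)$] The potential $W(e)$ of an edge is invariant under reversing the edge, so reversing a path shows $\Elev(z,y)=\Elev(y,z)$.
\item[$(ii)$] Concatenating a path from $x$ to $z$ with a path from $z$ to $y$ gives $\Elev(x,y)\le\max\{\Elev(x,z),\Elev(z,y)\}$.
\item[$(iii)$] Since $V\ge 0$, \eqref{eq:planclaim} applied to $x$ and to $y$ yields $\Elev(x,y)\le \max\{V(x),V(y)\}+\CR(z)\le V(x)+V(y)+\CR(z)$.
\end{itemize}
Taking the maximum over $x,y$ then gives $\ec\le \CR(z)$. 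Minimizing over $z\in\mC$ gives $\ec\le\mCR(\co)$.

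The key ingredient for \eqref{eq:planclaim} is the one-step inequality
\[
\tV(y)\le \tV(x)+\co(x,y)\qquad\text{for all } x,y\in\mS,
\]
hence $V(y)\le V(x)+\co(x,y)$. I would prove it by tree surgery. Let $g$ be an optimal $x$-tree, and take $y\neq x$ with $\co(x,y)<\infty$ (otherwise there is nothing to prove). Delete the unique outgoing edge of $y$ in $g$. This splits $g$ into the subtree of states whose path leads to $y$, and the part containing the root $x$. Note that $x$ cannot lie in $y$'s part, because $x$ is the root. Now add the edge $(x,y)$. The result has no loops, $y$ has no successor, and every other state has exactly one successor. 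So it is a $y$-tree of cost at most $\co(g)+\co(x,y)$.

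To prove \eqref{eq:planclaim} itself, the case $x=z$ is trivial: use the trivial path, or any loop, whose cost is controlled. For $x\neq z$, pick a path $\gamma=(x_0,\dots,x_k)$ from $x$ to $z$ of minimal cost, so $\co(\gamma)=r(x,z)\le\CR(z)$. Iterating the one-step inequality along $\gamma$ gives
\[
V(x_i)\le V(x)+\sum_{j<i}\co(x_j,x_{j+1}).
\]
Hence
\[
W(x_i,x_{i+1})\le V(x_i)+\co(x_i,x_{i+1})\le V(x)+\co(\gamma)\le V(x)+\CR(z),
\]
and so $\Elev(x,z)\le \max_i W(x_i,x_{i+1})\le V(x)+\CR(z)$.

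The main obstacle I expect is checking that the surgery really produces an admissible tree, namely that it has no cycle and finite cost. This is where the root-position observation in the surgery step is needed. Beyond that, the argument seems to use only the definition of $\CR(z)$ as a maximum of resistances into $z$. Recurrence of $z$ enters only to identify $\CR(z)$ with $\CR(\mC_i)$ and to make the minimum over $\mC$ meaningful. I would double-check this point against the intended definition of $\CR$ for single states.
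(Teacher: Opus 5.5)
Your proposal is correct and follows essentially the same route as the paper, which also bounds $W$ along minimal-cost paths from $x$ and from $y$ into $z$ by grafting path edges onto an optimal $x$-tree; this is exactly your iterated one-step inequality $\tV(x_{i+1})\le \tV(x_i)+\co(x_i,x_{i+1})$. The paper then concatenates the path from $x$ with the reversed path from $y$ using the symmetry of $W$, and, as you observed, it never uses the recurrence of $z$.
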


\vspace{1ex}
\subsection{Ergodicity, corradius and inhomogeneous models}
Let $(X_n)_{n\geq 1}$ be a time inhomogeneous Markov chain in $\mS$ with transition matrices $(P_n)_{n\geq 1}$. Also, for all $1\leq m<n$, let $P^{(m,n)}:= P_m \cdot P_{m+1}\cdots P_{n-1}$ be the $(n-m)$-step transition starting from time $m$. Recall that $(X_n)_{n}$ is
 weakly ergodic if, for all $m \geq 1$ and $x,y,z \in \mS$,
\begin{equation}
\label{eq:w_ergodic}
\lim_{n \rightarrow + \infty} \left| P^{(m,n)}(x,z) - P^{(m,n)}(y,z) \right| = 0.
\end{equation} 
This property is also known as total variation merging; see \citep{saloff2009merging,saloff2011merging} for related notions and conditions ensuring quantitative convergence rates in \eqref{eq:w_ergodic}. 

The chain is strongly ergodic if there exists a probability distribution $\pi\in \Delta(\mS)$ such that
$$\lim_{n \rightarrow + \infty}  P^{(m,n)}(x,z)  = \pi(z),\quad x,z \in \mS.$$

As mentioned, the inhomogeneous model of evolution in \citep{Pak08} is the closest to our approach, where the underlying homogeneous model is more restrictive than the one given by Definition~\ref{def:hom_ev}. Precisely, the following additional hypotheses to Definition~\ref{def:hom_ev} are considered.

\begin{equation}\label{hyp:pak}
\left\{
\begin{aligned}
& \bullet \,\, \text{The cost function}\,\,\co \,\,\text{takes values in } \{0,1,2,\ldots \} \cup \{+\infty\},\\
&\bullet\, \text{The transition matrix }  P_{\ve}\text{ is ergodic for all }  \ve \in (0,1].
\end{aligned}
\right.
\end{equation}

One of the earliest inhomogeneous models of evolution is due to \citep{Rob98}, which corresponds to the framework introduced in \citep{KMR93}. This model satisfies \eqref{hyp:pak} and a decreasing vanishing mutation rate is considered. Roughly speaking, it is shown that if $\ve_n=n^{-A}$ with $A$ such that $A\mCR(\co)\leq1$, then the induced inhomogeneous Markov chain is strongly ergodic, with limiting distribution $\pi^*$. This result applies only to the case where the class achieving $\mCR(\co)$ is aperiodic (cf. Examples~\ref{ex:pak} and \ref{ex:pak_cont}).

 In order to take into account possible periodicity of the chain induced by the transition matrix $\Plim$, and using the fact that $P_\ve$ is ergodic for all $\ve \in (0,1]$, \citep{Pak08} introduces the quantity
\begin{equation*}
\eta(x):=\inf_{0<\ve\leq 1}\min_{m \geq 1} \max_{y \in \mC_x}r_m^\ve(y,x), \quad   x \in \mC,
\end{equation*}

\noindent where $r_m^\ve(y,x)$ is the resistance
between $y$ and $x$ in exactly $m$ steps for the ergodic transition matrix $P_\ve$, and $\mC_x$ is the recurrent class (for the limit matrix $\Plim$) to which $x$ belongs. Notice that if $\mC_x$ is an aperiodic recurrent class, then $\eta(x)=0$. Let us define 
    \[
\nu(\co):= \min_{x \in \mC} \CR(x) + \eta(x).
\]
\begin{remark}
Notice that Theorem~\ref{thm:comparison} implies that, for all admissible cost function $\co$, one has that $\ec \leq \nu(\co)$.
\end{remark}

The following statement summarizes the main results established in \citep{Pak08}, relying mainly on the application of the classical characterization of weak ergodicity by means of ergodic coefficients (see for instance \citep{MadIsa73, Ios69,bremaud2013markov}).

\begin{theorem}[\citep{Pak08}]\label{thm:pak}
Consider a homogeneous model of evolution $(P_\ve)_{\ve}$ satisfying \eqref{hyp:pak}, and assume that the sequence $(\ve_n)_n$ is such that
$\ve_n = \Omega\bigl(n^{-A}\bigr)$ with $0<A\nu(\co)\leq 1$.
Then the induced inhomogeneous model is weakly ergodic. If, in addition, $\sum_{n \geq 1} \lvert \ve_{n+1} - \ve_n \rvert < \infty$, 
then strong ergodicity holds with limiting distribution $\pi^*$.
\end{theorem}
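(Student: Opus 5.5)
The statement is the main result of \citep{Pak08}. The plan is to reconstruct it with the classical ergodic-coefficient method: Dobrushin's coefficient for weak ergodicity, then the Isaacson--Madsen criterion for strong ergodicity. For a stochastic matrix $Q$ let
\[
\delta(Q):=1-\min_{x,y}\sum_z \min\{Q(x,z),Q(y,z)\}.
\]
It is submultiplicative, and $(X_n)_n$ is weakly ergodic as soon as there is a block decomposition $n_1<n_2<\cdots$ with $\sum_k \bigl(1-\delta(P^{(n_k,n_{k+1})})\bigr)=+\infty$ (see \citep{MadIsa73,Ios69}). It is also enough to exhibit, for each block, a single column $x^*$ with $P^{(n_k,n_{k+1})}(y,x^*)\geq \alpha_k$ for all $y$, since then $1-\delta\geq \alpha_k$.

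\textbf{Step 1 (uniform lower bounds on one-step transitions).} By \eqref{eq:Pe} and continuity of $h_{x,y}$ at $0$, there is $\kappa>0$ such that $P_\ve(x,y)\geq \kappa\,\ve^{\co(x,y)}$ for every finite-cost edge and all $\ve\in(0,\ve_0]$. Hence for any path $\gamma=(y_0,\dots,y_m)$ of finite cost and any times $n,\dots,n+m-1$,
\[
\PP\bigl(X_{n+i}=y_i,\ i\le m \mid X_n=y_0\bigr)\geq \kappa^m \prod_i \ve_{n+i}^{\co(y_i,y_{i+1})}\geq \kappa^m\, \bigl(\min_{n\le j<n+m}\ve_j\bigr)^{\co(\gamma)}.
\]
Since $\ve_n=\Omega(n^{-A})$ and the block length will be fixed, the right-hand side is $\Omega(n^{-A\co(\gamma)})$.

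\textbf{Step 2 (a common block length $L$ — the main obstacle).} Fix $x^*\in\mC$ attaining $\nu(\co)=\CR(x^*)+\eta(x^*)$. I need one integer $L$ such that from every $y\in\mS$ there is a path of exactly $L$ steps ending at $x^*$ with cost at most $\nu(\co)$.

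From $y$, first follow a minimal-resistance path into $\mC_{x^*}$, of cost at most $\CR(x^*)$, entering at some $y'\in\mC_{x^*}$. Inside $\mC_{x^*}$, zero-cost moves (those of $\Plim$) are available, and the definition of $\eta$ gives some $m$ with $r^\ve_m(y'',x^*)\le\eta(x^*)$ for every $y''\in\mC_{x^*}$. Because the costs are integers and the minimum over $\ve$ is attained, I can use an exact-$m$-step path realizing this bound.

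The difficulty is periodicity: different starting points $y$ produce entry paths of different lengths, and inside a periodic class the time spent cannot be padded arbitrarily at zero cost. I would handle this as follows. The zero-cost return times to any state of $\mC_{x^*}$ are exactly the multiples of the period $d$ from some point on. Let $m$ be the index from the definition of $\eta$. I then pad each entry path with zero-cost steps inside $\mC_{x^*}$, chosen so that the walk sits at a state $y''$ at time $L-m$, and apply the $m$-step path from $y''$ to $x^*$. Because the maximum over $y\in\mC_x$ is built into $\eta$, every residue class modulo $d$ is covered, so one large $L$ works for all $y$ simultaneously.

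With this $L$, Step 1 gives $P^{(kL,(k+1)L)}(y,x^*)\geq c\,(kL)^{-A\nu(\co)}$ for all $y$. Consequently
\[
\sum_k \bigl(1-\delta(P^{(kL,(k+1)L)})\bigr)\geq c\sum_k k^{-A\nu(\co)}=+\infty \quad\text{whenever } A\nu(\co)\le 1,
\]
which proves weak ergodicity.

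\textbf{Step 3 (strong ergodicity).} I would invoke the Isaacson--Madsen theorem: weak ergodicity together with invariant laws $\pi_n$ of $P_n$ satisfying $\sum_n\infn{\pi_{n+1}-\pi_n}<\infty$ yields strong ergodicity, with limit $\lim_n\pi_n$. Here $\pi_n=\pi_{\ve_n}$, so it suffices to show that $\ve\mapsto\pi_\ve$ is Lipschitz on $(0,1]$.

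By the Markov chain tree theorem, $\pi_\ve(x)=N_x(\ve)/\sum_z N_z(\ve)$ with
\[
N_x(\ve)=\sum_{g\in\mG_x}\prod_{(u,w)\in g}P_\ve(u,w)=\ve^{\co_0}\sum_{g\in\mG_x}\ve^{\co(g)-\co_0}\prod_{(u,w)\in g} k(u,w)\bigl(1+h_{u,w}(\ve)\bigr).
\]
After cancelling $\ve^{\co_0}$, the numerator and denominator are Lipschitz, because the exponents $\co(g)-\co_0$ are nonnegative integers under \eqref{hyp:pak}. The denominator is bounded below near $0$ by the contribution of the optimal trees in $\mG^0$. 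Hence $\infn{\pi_{\ve_{n+1}}-\pi_{\ve_n}}\le C|\ve_{n+1}-\ve_n|$, which is summable by hypothesis. The limit is therefore $\lim_{\ve\to0}\pi_\ve=\pi^*$.
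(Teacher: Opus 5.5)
Your reconstruction is correct and follows the route the paper points to. The paper gives no proof of its own: it only cites \citep{Pak08} and mentions the ergodic-coefficient characterization of \citep{MadIsa73,Ios69}. Your argument uses Dobrushin's coefficient over blocks of fixed length $L$, with the single column $x^*$ attaining $\nu(\co)$ receiving mass $\Omega(k^{-A\nu(\co)})$. It then applies the Isaacson--Madsen criterion, fed by the Lipschitz dependence of $\pi_\ve$ on $\ve$ from the tree formula. One simplification: the residue-class discussion in Step 2 is unnecessary, because $\eta(x^*)$ bounds $r_m(y'',x^*)$ for every $y''\in\mC_{x^*}$ with one common $m$, so any zero-cost padding inside $\mC_{x^*}$ already works.
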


\begin{example}[Example~\ref{ex:pak} continued]\label{ex:pak_cont} As mentioned, Example~\ref{ex:pak} shows the importance of the term $\eta(x)$ to establish strong ergodicity by means of Theorem~\ref{thm:pak}. In this example, $1= \CR(\{\text{BL}\})=\CR(\{\text{TR}\})$ and $\CR(\{\text{TL}\})=\CR(\{\text{BR}\})= 3$, so that $\mCR(\co)=1$. Given that $\{\text{TL}\}$ and $\{\text{BR}\}$ are aperiodic classes, $\eta(\text{TL})=\eta(\text{BR})= 0$. Also, one has that  $\eta(\text{BL})=\eta(\text{TR})= 3$, then $\nu(\co)=3$ and Theorem~\ref{thm:pak} implies strongly ergodic if $\ve_n=\Omega(n^{-A})$ with $A\leq 1/3$. In fact, it is proven for this specific example that weak ergodicity does not hold when  $1/2<A\leq 1$, showing that the aforementioned result of \citep{Rob98} does not apply when the minimum coradius class is periodic.
   \end{example}

\section{Proof of Theorem \ref{thm:main}} \label{sec:proof}
\subsection{Overview of the proof}

Using classical tree machinery for Markov chains, we first derive an explicit formula for the limit distribution $\pi^*$ associated with a model $(P_\ve)_\ve$, and show that $\infn{\pi_\ve - \pi^*}$ converges to zero at a rate controlled by the tree-optimality gap~$\theta$ (see Proposition~\ref{prop:tech}). In Proposition~\ref{prop:spectral_gap} and Theorem~\ref{thm:spectral}, we show how classical spectral gap estimates for reversible kernels (e.g. \citep{Dia91}) can be adapted to the nonreversible matrix $P_\ve$ via additive symmetrization. These bounds are then used to analyze the induced inhomogeneous model through the study of the noise sequence $v_n - \pi^*$.

More precisely, let $P_n = P_{\ve_n}$ denote the transition kernels of the induced inhomogeneous evolution model, and let $\pi_n = \pi_{\ve_n}$ be the invariant distribution of $P_n$. Let $Q_n$ denote the {\em pseudo-inverse} of $I-P_n$, i.e.\ the matrix with zero row sums that satisfies the Poisson equation
\begin{equation}
Q_n(I - P_n) = (I - P_n)Q_n = I - \Pi_n, \label{eq:pseudo}
\end{equation}
where $\Pi_n$ is the matrix with constant rows equal to $\pi_n$. By a slight abuse of notation, we refer to $Q_n$ as the pseudo-inverse of $P_n$.

Our main strategy is to use a suitable decomposition to bound $v_n - \pi^*$, relying on estimates for $\infn{Q_n}$, which in turn are obtained using the spectral gap bounds established earlier. For this analysis, we borrow ideas from classical stochastic approximation algorithms (\citep{Ben99,KusYin03,benaim2005stochastic}), originally developed for certain classes of inhomogeneous Markov chains, via the study of an associated differential inclusion (\citep{BenRai10}). In fact, this approach makes it possible to prove that $v_n$ converges to $\pi^*$ under the assumptions of Theorem~\ref{thm:main}. Roughly speaking, under some technical conditions (see \citep[Hypothesis~3.1]{BenRai10}) on the sequence $(P_n, Q_n, \pi_n)_n$, one can show that the continuous linear interpolation of $(v_n)_n$ is an {\em asymptotic pseudo-trajectory} (APT) of the ODE $\dot v = \pi^* - v$. It then follows that the limit set of $(v_n)_n$ is almost surely contained in the global attractor $\{\pi^*\}$. The APT approach has been successfully applied to other classes of inhomogeneous Markov chains, such as in \citep{BourCloez18} for finite state spaces (cf. Example~\ref{ex:cloez}) and \citep{benaim2017ergodicity} for a class of chains in $\RR^d$.

In our setting, the specific structure of the process allows us to show that the sequence $(P_n, Q_n, \pi_n)_n$ satisfies stronger conditions than those required in the general framework (see Proposition~\ref{prop:Q2}). This property enables us to establish almost sure convergence, together with explicit estimates on the rate at which $\EE(\infn{v_n - \pi^*})$ vanishes.

\subsection{Explicit formula}

Recall that $\co_0$ stands for the minimum cost among every $x$-tree in the directed graph induced by the cost function $\co$. We denote by $\mS_0$ the set of states $x$ in which $V(x)=0$, i.e.
\[
 \mS_0= \{x \in \mS\,:\, \tV(x)= \co_0\},
\]
and, for all $x\in \mS$, by  $\mG_x^0$  the set of $x$-trees where the value $\tV(x)$ is attained:
\[
\mG_x^0= \{g_x \in \mG_x \,:\, \co(g_x)=\tV(x)\}.
\]

We call $\mS_1$ and $\mG_x^1$ the complement of $\mS_0$ and $\mG_x^0$, respectively. Observe that with these definitions, $\cup_{x \in \mS_0}\mG_x^0= \mG^0$, where $\mG^0$ is the set of optimal trees defined in \eqref{eq:g0}. The following proposition, whose proof is deferred to Appendix~\ref{app:prop:tech}, plays a central role in our analysis as it provides an explicit formula for $\pi^*$ and establishes a precise rate of convergence of $\pi_\ve$ to $\pi^*$ in terms of the tree-optimality gap.

\vspace{2ex}
\begin{proposition}\label{prop:tech}
Let $(P_{\ve})_{\ve}$ be a homogeneous model of evolution with cost function $\co$. Let  $\pi_\ve$ be the invariant distribution of $P_\ve$. Then, the limit distribution $\pi^*$ is given by
\begin{equation} \label{eq:pi_explicit}
\pi^*(x)= \begin{cases}\dfrac{\sum_{g_x \in \mG_x^0} \prod_{e \in g_x} k(e)}{\sum_{y \in \mS_0}\sum_{g_y \in \mG_y^0} \prod_{e \in g_y} k(e)}, & x \in \mS_0,\\[1ex]
\phantom{0000000}0 &\text{otherwise}.
\end{cases}.
\end{equation}
Also, there exists a constant $C \geq 0$ such that $\infn{\pi_\ve-\pi^*}\leq C \ve^{\min\{\theta,1\}}$, where $\theta$ is the tree-optimality gap defined in \eqref{eq:theta}. 
In particular, if $\co$ takes integer values, then $\infn{\pi_\ve-\pi^*}\leq C \ve$.
\end{proposition}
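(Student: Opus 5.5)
The plan is to apply the Markov chain tree theorem (Freidlin--Wentzell) to the irreducible kernel $P_\ve$. It gives
\[
\pi_\ve(x)=\frac{\sum_{g\in\mG_x}\prod_{e\in g}P_\ve(e)}{\sum_{y\in\mS}\sum_{g\in\mG_y}\prod_{e\in g}P_\ve(e)} .
\]
Only trees of finite cost contribute, since an edge of infinite cost has $P_\ve(e)=0$. Here diagonal loops never appear, because trees contain no loops.

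Next, use Definition~\ref{def:hom_ev}. For each finite-cost tree $g$ we have
\[
\prod_{e\in g}P_\ve(e)=\ve^{\co(g)}\prod_{e\in g}k(e)\prod_{e\in g}\bigl(1+h_e(\ve)\bigr).
\]
Since each $h_e$ is Lipschitz with $h_e(0)=0$ and the tree has at most $|\mS|-1$ edges, the last product equals $1+O(\ve)$ uniformly over the finitely many trees. Write $K(g):=\prod_{e\in g}k(e)>0$.

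Now multiply numerator and denominator by $\ve^{-\co_0}$. The denominator becomes
\[
D_\ve=\sum_{g\in\mG^0}K(g)\bigl(1+O(\ve)\bigr)+\sum_{g\in\mG^1}\ve^{\co(g)-\co_0}K(g)\bigl(1+O(\ve)\bigr).
\]
By definition of $\theta$, every $g\in\mG^1$ has $\co(g)-\co_0\ge\theta$. Hence, for $\ve\le1$,
\[
D_\ve=D_0+O\bigl(\ve+\ve^{\theta}\bigr)=D_0+O\bigl(\ve^{\min\{\theta,1\}}\bigr),
\]
where $D_0=\sum_{y\in\mS_0}\sum_{g\in\mG^0_y}K(g)>0$. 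This uses the identity $\mG^0=\cup_{y\in\mS_0}\mG^0_y$ noted before the statement. If $\mG^1=\emptyset$ the second sum is absent and the error is $O(\ve)$, consistent with the convention $\theta=+\infty$.

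The same expansion applies to the numerator $N_\ve(x)$:
\begin{itemize}
\item For $x\in\mS_0$, we get $N_\ve(x)=\sum_{g\in\mG^0_x}K(g)+O(\ve^{\min\{\theta,1\}})$, since the non-optimal $x$-trees lie in $\mG^1$.
\item For $x\in\mS_1$, all $x$-trees lie in $\mG^1$, so $N_\ve(x)=O(\ve^{\theta})$.
\end{itemize}

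Finally, since $D_0>0$ and $D_\ve\to D_0$, the quotient is Lipschitz near the limit. This gives $|\pi_\ve(x)-\pi^*(x)|\le C\ve^{\min\{\theta,1\}}$, with $\pi^*$ as in \eqref{eq:pi_explicit}. That formula also identifies the limit $\pi^*$. Summing over the finite set $\mS$ gives the norm bound.

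For integer costs, any $g\in\mG^1$ has $\co(g)-\co_0$ a positive integer, so $\theta\ge1$ and the rate is $\ve$.

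The only delicate point is the uniform bookkeeping that merges the two error sources, the $O(\ve)$ from the Lipschitz corrections and the $O(\ve^\theta)$ from suboptimal trees, into $\ve^{\min\{\theta,1\}}$. This relies on $\ve\le1$ and on the finiteness of $\mG$.
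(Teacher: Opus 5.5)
Your proof is correct and follows the paper's argument. Both use the Markov chain tree formula, factor out $\ve^{\co_0}$, bound the suboptimal trees in $\mG^1$ by $\ve^{\theta}$, and bound the Lipschitz corrections $\prod_{e}(1+h_e(\ve))$ by $O(\ve)$. The one cosmetic difference is that the paper passes through an intermediate distribution $\pi^*_\ve$ (the limit formula evaluated with the $\ve$-dependent weights $K^{g}(\ve)$) and treats the two error sources separately, whereas you expand numerator and denominator directly, which also avoids the paper's extra simplifying assumption that $h_e$ is differentiable.
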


\vspace{2ex}

\begin{remark} 
Since a model $(P_\ve)_\ve$ can be viewed as a particular family of Markov chains with rare transitions (cf.\ Remark~\ref{rem:MCRT}), we can directly apply \citep[Proposition 4.1]{Cat99} to obtain
\[
\lim_{\ve \rightarrow 0} \frac{\log \pi_{\ve}(x)}{\log(\ve)} = V(x).
\]
In Appendix~\ref{app:prop:tech}, we show precisely how this limit appears in our context (see \eqref{eq:piexp} for explicit computations).
\end{remark}

\vspace{0.5ex}
\subsection{Spectral gap estimates} Let $P$ be an irreducible Markov matrix on $\mS$, with invariant distribution $\pi$. We call $L^2(\pi)$ the euclidean space of functions $f:\mS \rightarrow \RR$, equipped with the scalar product
\[
\left<f \mid g \right>_{\pi} 
=\sum_{x \in \mS} f(x) g(x) \pi(x)\quad f,g: \mS \rightarrow \RR.
\]
Let $\pi(f)=\sum_{x \in \mS } f(x)\pi(x)$ be the expected value of $f$ and let $P^*$ be the adjoint of $P$, seen as a linear operator on $L^2(\pi)$. The adjoint operator is then given explicitly by
\[
P^*(x,y) = \frac{\pi(y)P(y,x)}{\pi(x)}, \quad x,y \in \mS. 
\]

 We are interested in lower bounds of $\lambda(P)$, the spectral gap of $P$, which is defined as
\[\lambda(P) := \inf \left\{\frac{\mE_P(f,f)}{\Var_{\pi}(f)} , \; \; f \in L^2(\pi), \; \, \Var_\pi(f) \neq 0\right\},\]
where
\begin{itemize}
\item[$\bullet$] $\Var_{\pi}(f)$ is the variance of $f$,
\[ \Var_{\pi}(f) := \|f - \pi(f)\|^2_{\pi} = \frac{1}{2} \sum_{x \neq y} \pi(x) \pi(y) (f(y) - f(x))^2,\] and\\
\item $\mE_P(f,f)$ is the Dirichlet form of $P$,
\[
\begin{aligned}
\mE_P(f,f) := \langle f \mid (I-P)f  \rangle_{\pi} &=& \sum_{x  \neq y} \pi(x)  f(x) P(x,y) \left( f(x) - f(y) \right) \\
&=&  \frac{1}{2} \sum_{x \neq y} \pi(x) P(x,y) (f(y) -f(x))^2.
\end{aligned}
\]
\end{itemize}

\vspace{2ex}
\begin{remark}
In the particular case where $P$ is reversible with respect to $\pi$, one has that $P^* = P$, and the spectral gap is equal to the quantity $1 - \lambda_2$, where $1 > \lambda_2 \geq ... \geq \lambda_{|\mS|}$ are the eigenvalues of the operator $P: L^2(\pi) \rightarrow L^2(\pi)$. Observe that in this case, $P$ is a symmetric contraction, that is, $||Pf||_{\pi} \leq ||{f}||_{\pi}$ for all $f \in L^2(\pi)$. For connections with alternative notions of symmetrization for nonreversible chains, see, for instance, \citep{chatterjee2025spectral}.
\end{remark}

 Let us call $\BFGamma$ the set of routing functions on $\mS$ and we denote by $\BFgamma \in \BFGamma$ a generic element, meaning that 
$$
\BFgamma:\mS \times \mS \to \bigcup\limits_{x,y \in \mS} \Gamma_{x,y}\quad  \text{and}\quad \BFgamma(x,y) \in \Gamma_{x,y},$$
that is, $\BFgamma(x,y)$ is a path joining $x$ and $y$. We call $e=(x_-,x_+) \in \mS\times \mS$ an admissible edge if $x_{-} \neq x_+$ and $P(x_-,x_+) >0$, and we denote by $E_P$ the set of oriented admissible edges associated to the kernel $P$.

The following is a well-known result from \citep{Dia91} adapted slightly to the case where irreducible Markov chain $P$ is not reversible with respect to $\pi$. 
\begin{proposition}\label{prop:spectral_gap} Let $P$ be an irreducible Markov matrix on $\mS$, with invariant probability measure $\pi$, and $M := \frac{P+P^*}{2}$. Then
\[\lambda(P) = \lambda \left( M\right)\geq \frac{1}{\inf_{\BFgamma \in \BFGamma}\kappa(\BFgamma)},\]
where
\[\kappa(\BFgamma) := \sup_{e=(x_-,x_+) \in E_M}\left\{ \frac{1}{\pi(x_-)M(x_-,x_+)} \sum_{(x,y): \, e \in \BFgamma(x,y)} |\BFgamma(x,y)| \pi(x) \pi(y)  \right\}, \]
and $|\BFgamma(x,y)|$ is the number of edges of the path $\BFgamma(x,y) \in \Gamma_{x,y}$.
\end{proposition}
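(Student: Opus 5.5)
The plan has two parts: first prove the identity $\lambda(P)=\lambda(M)$, then apply the classical canonical-path bound of Diaconis--Stroock to the reversible kernel $M$.

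For the identity, I will show that $M=\frac{P+P^*}{2}$ is a Markov kernel, reversible with respect to $\pi$, and that $\mE_M(f,f)=\mE_P(f,f)$ for every $f$. The entries $P^*(x,y)=\pi(y)P(y,x)/\pi(x)$ are nonnegative. Their row sums equal $1$ because $\pi$ is invariant: $\sum_y \pi(y)P(y,x)=\pi(x)$. Hence $M$ is stochastic. Moreover $\pi(x)M(x,y)=\frac12(\pi(x)P(x,y)+\pi(y)P(y,x))$ is symmetric in $(x,y)$, so $M$ is reversible with respect to $\pi$, and in particular $\pi$ is invariant for $M$. For the Dirichlet forms, $\langle f\mid P^*f\rangle_\pi=\langle Pf\mid f\rangle_\pi=\langle f\mid Pf\rangle_\pi$ since the inner product is real and symmetric. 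Therefore $\langle f\mid (I-M)f\rangle_\pi=\langle f\mid (I-P)f\rangle_\pi$. The variance depends only on $\pi$, so taking the infimum of the ratio gives $\lambda(P)=\lambda(M)$. I also note that $E_M$ consists of the pairs with $P(x,y)>0$ or $P(y,x)>0$, and that $M$ is irreducible, so a routing function along edges of $E_M$ exists. Routings that use a non-admissible edge should be excluded or given $\kappa=+\infty$; this convention must be stated.

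For the path bound, fix a routing $\BFgamma$ whose paths use only edges in $E_M$, and fix $f$. For each pair $(x,y)$, write $f(y)-f(x)$ as a telescoping sum $\sum_{e\in\BFgamma(x,y)} df(e)$, where $df(e)=f(x_+)-f(x_-)$. Cauchy--Schwarz then gives $(f(y)-f(x))^2\le |\BFgamma(x,y)|\sum_{e\in\BFgamma(x,y)} df(e)^2$. Plugging this into $\Var_\pi(f)=\frac12\sum_{x\neq y}\pi(x)\pi(y)(f(y)-f(x))^2$ and exchanging the order of summation yields
\[
\Var_\pi(f)\le \frac12\sum_{e\in E_M} df(e)^2\sum_{(x,y):\,e\in\BFgamma(x,y)}|\BFgamma(x,y)|\pi(x)\pi(y).
\]
Each inner sum is at most $\kappa(\BFgamma)\,\pi(x_-)M(x_-,x_+)$. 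Hence $\Var_\pi(f)\le\kappa(\BFgamma)\cdot\frac12\sum_{e}\pi(x_-)M(x_-,x_+)\,df(e)^2=\kappa(\BFgamma)\,\mE_M(f,f)$. Dividing by $\Var_\pi(f)$ gives $\lambda(M)\ge 1/\kappa(\BFgamma)$, and optimizing over $\BFgamma$ gives the claim.

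The main obstacle is bookkeeping rather than substance. The paths $\BFgamma(x,y)$ may traverse an edge $e$ in either orientation, and may contain a given edge more than once. Reversibility of $M$ makes the weight $\pi(x_-)M(x_-,x_+)$ symmetric, so orientation is harmless. Repeated edges are handled by taking paths to be simple without loss of generality, or by counting multiplicity inside $\kappa$. Once these are settled, matching the factor $\frac12$ in the variance with the $\frac12$ in $\mE_M$ is routine.
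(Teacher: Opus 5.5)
Your proposal is correct and follows the paper's route. The equality comes from $\langle f\mid P^*f\rangle_\pi=\langle f\mid Pf\rangle_\pi$, and the lower bound is the Diaconis--Stroock canonical-path bound applied to the $\pi$-reversible kernel $M$; the paper simply cites that bound, while you reprove it via Cauchy--Schwarz. Your requirement that routings use only edges of $E_M$ (and can be taken simple by loop-erasure, which only decreases $\kappa(\BFgamma)$) is a correct clarification that the paper leaves implicit: $\Gamma_{x,y}$ as defined there also contains paths through non-admissible edges, and for such routings the bound as literally written can fail.
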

\proof{Proof}
The proof of the equality follows from the fact that $\langle Pf\mid f \rangle_\pi=\langle f \mid P^*f \rangle_\pi ,$ implies that $\langle f \mid (I-P)f  \rangle_{\pi} = \langle f \mid (I-M)f  \rangle_{\pi}$. The lower bound is a direct application of \citep[Proposition 1\textquotesingle]{Dia91} to the kernel $M$, which is  reversible with respect to $\pi$. $\qed$

 The following result, whose proof is given in Appendix~\ref{app:thm:spectral} and follows the main ideas in \citep[Theorem 6.3]{Cat99} for the reversible case, provides an explicit lower bound for the spectral gap of $P_\ve$ as a function of $\ve$ and the energy barrier $\ec$.

\begin{theorem} \label{thm:spectral}
Let $(P_{\ve})_{\ve}$ be a homogeneous model of evolution with admissible cost function $\co$. Then, there exists some strictly positive constant $C$ such that 
\[
\lambda(P_{\ve}) \geq C \ve^{\ec}.
\]
\end{theorem}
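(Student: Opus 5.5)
We apply Proposition~\ref{prop:spectral_gap} to $P_\ve$ with a routing function built from the optimal paths in the definition of $\Elev$. The goal is to show $\kappa(\BFgamma)\le C'\ve^{-\ec}$ uniformly in small $\ve$. For $\ve$ bounded away from zero, irreducibility and continuity give a positive lower bound on $\lambda(P_\ve)$. So only small $\ve$ matters, and there the whole task is an order-of-magnitude comparison of powers of $\ve$.

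\emph{Step 1: asymptotics of $\pi_\ve$.} From the Markov chain tree theorem, $\pi_\ve(x)\propto\sum_{g\in\mG_x}\prod_{e\in g}P_\ve(e)$. Definition~\ref{def:hom_ev} makes each product of order $\ve^{\co(g)}$ with bounded positive constants, so $c_1\ve^{V(x)}\le \pi_\ve(x)\le c_2\ve^{V(x)}$. This is the two-sided version of the estimate behind Proposition~\ref{prop:tech}.

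\emph{Step 2: the symmetrized weights.} For the edge weight $\pi_\ve(x_-)M_\ve(x_-,x_+)=\tfrac12\bigl(\pi_\ve(x_-)P_\ve(x_-,x_+)+\pi_\ve(x_+)P_\ve(x_+,x_-)\bigr)$, Step 1 gives $\pi_\ve(x_-)M_\ve(x_-,x_+)\ge c\,\ve^{W(e)}$. The edge $e$ lies in $E_{M_\ve}$ exactly when $W(e)<\infty$. Here additive symmetrization is what lets the minimum over the two orientations appear in $W$.

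\emph{Step 3: routing and summation.} For each pair $(x,y)$, choose $\BFgamma(x,y)$ to be a path attaining $\Elev(x,y)$. We may take it simple, so $|\BFgamma(x,y)|\le|\mS|$, and every edge on it satisfies $W(e)\le \Elev(x,y)<\infty$, hence lies in $E_{M_\ve}$. Then for each $e$,
\[
\frac{1}{\pi_\ve(x_-)M_\ve(e)}\sum_{(x,y):e\in\BFgamma(x,y)}|\BFgamma(x,y)|\pi_\ve(x)\pi_\ve(y)\le C\sum_{(x,y):e\in\BFgamma(x,y)}\ve^{V(x)+V(y)-W(e)} .
\]
Since $W(e)\le\Elev(x,y)$, each term is bounded by $\ve^{V(x)+V(y)-\Elev(x,y)}\le \ve^{-\ec}$ for $\ve\le1$, by the definition of $\ec$. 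There are finitely many pairs, so $\kappa(\BFgamma)\le C'\ve^{-\ec}$, and Proposition~\ref{prop:spectral_gap} gives $\lambda(P_\ve)\ge C\ve^{\ec}$.

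The main obstacle is Step 1, the two-sided bound on $\pi_\ve$. The upper bound is immediate. The lower bound needs the tree sums not to cancel, which holds because all $k(e)>0$ and $1+h(\ve)\ge 1/2$ for $\ve$ small, so every term is positive. A second point to handle is that the transition probabilities must be uniformly controlled for all small $\ve$ simultaneously; the Lipschitz $h_{x,y}$ with $h_{x,y}(0)=0$ provide exactly this.
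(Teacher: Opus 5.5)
Your proposal is correct and follows essentially the same route as the paper. Both arguments use the two-sided bound $\pi_\ve(x)\asymp \ve^{V(x)}$ from the tree formula, the lower bound $\pi_\ve(x_-)M_\ve(x_-,x_+)\ge c\,\ve^{W(e)}$ from additive symmetrization, and canonical paths attaining $\Elev(x,y)$ in the Diaconis--Stroock bound of Proposition~\ref{prop:spectral_gap}. The only difference is cosmetic. You treat $\ve$ bounded away from zero separately by continuity, whereas the paper simply assumes without loss of generality that $|h_{x,y}|\le 1/2$ on the whole interval.
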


\subsection{Estimations for the induced inhomogeneous chain}
The proof of the following result is left to Appendix~\ref{app:prop:Q2}.
\begin{proposition}
\label{prop:Q2} Let $(P_{\ve})_{\ve}$ be a homogeneous model of evolution, and $(\ve_n)_n$ be an $A$-vanishing mutation rate. Define $P_n= P_{\ve_n}$ and denote by $Q_n$ its pseudo-inverse matrix and by $\pi_n$ its unique invariant distribution. Then, there exist constants $C_i \geq 0$, $i=1,\ldots,4$, such that for all $n \geq 1$
\begin{itemize}\setlength\itemsep{0.7em}
\item[$(i)$] $ \infn{Q_n} \leq C_1 n^{A \ec}\log(\log(n+2))\log(n+1).$ 
If the potential function is such that $V\equiv 0$, then the logarithmic terms can be avoided.

\item[$(ii)$] $ \infn{P_{n+1} - P_n}\leq  C_2 \dfrac{1}{n^{1+\alpha}},$
where $0<\alpha \leq A$ is defined by
\begin{equation}\label{eq:alpha}
\alpha=
\begin{cases}
A \underline{c} &\text{if}\,\,\,\,\, 0<\underline{c}<1,\\
A &\text{otherwise},
\end{cases}
\end{equation}
and $\underline{c}$ is the minimum value of $\co$ over the set of pairs $(x,y)$ with strictly positive finite cost. If $\co$ takes integer values, then $\alpha=A$. 

\item[$(iii)$] $\infn{\pi_{n} - \pi^*}\leq C_3\dfrac{1}{n^{A\min\{\theta,1\}}},$ 
where $\theta$ is the tree-optimality gap defined in \eqref{eq:theta}. 

\item[$(iv)$] $\infn{\pi_{n+1} - \pi_{n}}\leq C_4 \dfrac{\log(\log(n+2))\log(n+1)}{n^{1+\alpha- A\ec}}.$
\end{itemize}
\end{proposition}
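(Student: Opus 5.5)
We prove the four items separately, in the order (iii), (ii), (i), (iv); item (iv) combines the others.

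\emph{Item (iii).} This follows at once from Proposition~\ref{prop:tech}. That result gives $\infn{\pi_\ve-\pi^*}\le C\ve^{\min\{\theta,1\}}$. Since $(\ve_n)_n$ is $A$-vanishing, the telescoping remark after the definition gives $\ve_n=O(n^{-A})$, and substituting this yields (iii).

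\emph{Item (ii).} Fix an entry with finite positive cost and write $P_\ve(x,y)=\ve^{\co(x,y)}k(x,y)(1+h_{x,y}(\ve))$. By the mean value theorem,
\[
|P_{\ve_{n+1}}(x,y)-P_{\ve_n}(x,y)|\le C\bigl(\xi_n^{\co(x,y)-1}+\xi_n^{\co(x,y)}\bigr)|\ve_{n+1}-\ve_n| ,
\]
where $\xi_n$ lies between $\ve_n$ and $\ve_{n+1}$.
\begin{itemize}
\item If $\co(x,y)\ge 1$, the bracket is bounded, and the assumption $|\ve_{n+1}-\ve_n|=O(n^{-1-A})$ gives the bound.
\item If $0<\co(x,y)<1$, use instead $|a^c-b^c|\le |a-b|^c$. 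Together with the Lipschitz part this gives $O(n^{-(1+A)\co(x,y)})$, which is not good enough on its own. A better estimate is $\xi_n^{c-1}|\ve_{n+1}-\ve_n|\le C n^{A(1-c)}n^{-1-A}=Cn^{-1-Ac}$, using $\ve_n=\Omega(n^{-A})$ and the fact that $\ve_{n+1}/\ve_n\to1$.
\end{itemize}
Zero-cost entries are handled in the same way. The diagonal entries are fixed by the zero row-sum condition, so they are controlled by the off-diagonal ones. Taking the worst case over all entries gives $\alpha=A\min\{\underline c,1\}$.

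\emph{Item (i).} This is the main step. By Theorem~\ref{thm:spectral}, $\lambda(P_n)\ge C\ve_n^{\ec}\ge C'n^{-A\ec}$. From the spectral gap we first control $Q_n$ in $L^2_0(\pi_n)$, and then pass to the infinity norm. For the $L^2$ part, write $Q_n=\sum_{k\ge0}(P_n^k-\Pi_n)$. For nonreversible $P$, use the lazy version: note that $I-P$ has numerical range with real part at least $\lambda$ on mean-zero functions, since $\langle f,(I-P)f\rangle_\pi=\mE(f,f)\ge\lambda\Var(f)$. Hence $\|(I-P)f\|_\pi\ge\lambda\|f\|_\pi$, and so $\|Q_nf\|_{\pi_n}\le\lambda^{-1}\|f-\pi_n(f)\|_{\pi_n}$.

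To convert to the infinity norm we pay factors of $\pi_n(x)^{-1/2}$. By the Remark after Proposition~\ref{prop:tech}, $\pi_n(x)\approx\ve_n^{V(x)}$, so these factors cost extra powers of $n$. I would avoid that loss with the standard trick:
\begin{enumerate}
\item Write $Q_n=\sum_{k<T}(P_n^k-\Pi_n)+P_n^TQ_n$.
\item Bound $\infn{P_n^k-\Pi_n}\le2$ for each $k<T$.
\item Bound the tail through $\|P_n^T-\Pi_n\|_{\infty}\le\pi_{\min}^{-1/2}e^{-T\lambda}$, via $L^2\to L^\infty$ with the gap of the multiplicative reversibilization, or the additive one for the lazy chain.
\item Choose $T\approx\lambda^{-1}\log(1/\pi_{\min})=O(n^{A\ec}\log n)$.
\end{enumerate}
This accounts for the $\log(n+1)$ factor; the $\log\log$ factor presumably comes from the doubling/laziness step or from summing dyadic blocks. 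When $V\equiv0$, $\pi_{\min}$ is bounded below and the logarithms disappear. The expected obstacle is the nonreversible contraction step. I would first try working with the lazy kernel $(I+P_n)/2$, which has the same $\pi_n$ and a comparable pseudo-inverse ($Q$ scaled by $2$), so that the additive symmetrization gap gives contraction in $L^2$.

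\emph{Item (iv).} Use the perturbation identity $\pi_{n+1}-\pi_n=\pi_{n+1}(P_{n+1}-P_n)Q_n$. It holds because $\pi_{n+1}(I-P_n)Q_n=\pi_{n+1}-\pi_n$ and $\pi_{n+1}(I-P_{n+1})=0$. Hence $\infn{\pi_{n+1}-\pi_n}\le\infn{P_{n+1}-P_n}\infn{Q_n}$, and combining (i) and (ii) gives the stated rate.
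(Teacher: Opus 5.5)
Your proposal is correct. Items (ii)--(iv) follow the paper's argument essentially verbatim: the same derivative bound $\ve^{c-1}\lvert\ve_{n+1}-\ve_n\rvert$ with $\ve_n=\Omega(n^{-A})$ for costs in $(0,1)$ (the paper simply sets $\ve_n=n^{-A}$, while you handle a general $A$-vanishing sequence), the same appeal to Proposition~\ref{prop:tech}, and the same identity $\pi_{n+1}-\pi_n=\pi_{n+1}(P_{n+1}-P_n)Q_n$.

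The genuine difference is in (i).
\begin{itemize}
\item \emph{The paper's route.} It does not itself derive the passage from the spectral gap to $\infn{Q_n}$; it quotes Saloff-Coste's estimate (Appendix~\ref{app:prop:Q2}). That estimate works with the continuous-time semigroup $e^{t(P_n-I)}$, whose $L^2(\pi_n)$ decay is governed directly by the Dirichlet form, so nonreversibility and periodicity cause no trouble. It then reaches $L^\infty$ through a log-Sobolev/hypercontractivity step. Bounding the log-Sobolev constant $\rho_n$ by $\rho_n^{-1}\lesssim\lambda(P_n)^{-1}\log(1/\overline\pi_n)$, and waiting a time of order $\rho_n^{-1}\log\log(1/\overline\pi_n)$, is what produces the factor $\log(\log(n+2))\log(n+1)$ once $\overline\pi_n\ge\underline K n^{-A\overline v}$ is inserted.
\item \emph{Your route.} You use the lazy kernel $(I+P_n)/2$, whose pseudo-inverse is $2Q_n$. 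It contracts in $L^2(\pi_n)$ because $\lVert\tfrac{I+P}{2}f\rVert_\pi^2\le\lVert f\rVert_\pi^2-\tfrac12\mE_P(f,f)$, and the same holds for its adjoint, which is what acts on rows. After that you need only the spectral gap. Since $\pi_nQ_n=0$, you have $P^TQ=(P^T-\Pi)Q$. Once $\infn{P^T-\Pi}\le\tfrac12$, which holds for $T\asymp\lambda^{-1}\log(1/\overline\pi_n)$, absorption gives $\infn{Q}\le 4T$.
\end{itemize}
Your argument is self-contained and slightly sharper: it gives $O(n^{A\ec}\log n)$ with no $\log\log$ term. This implies the stated bound because $\log\log(n+2)$ is bounded below by a positive constant. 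So your guess about where the $\log\log$ factor comes from is off, but harmless.

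Two small corrections.
\begin{itemize}
\item The multiplicative-reversibilization option is not available from Theorem~\ref{thm:spectral}, which only controls the additive symmetrization. The lazy kernel is therefore the one to use, with rate $e^{-T\lambda/4}$ rather than $e^{-T\lambda}$.
\item For the claim that the logarithms disappear when $V\equiv0$, you need the two-sided bound $\underline K\ve^{V(x)}\le\pi_\ve(x)\le\overline K\ve^{V(x)}$ that follows from \eqref{eq:piexp}. The logarithmic limit in the Remark after Proposition~\ref{prop:tech} is not enough.
\end{itemize}
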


\subsection{Proof of Theorem~\ref{thm:main}: Conclusion} \label{sec:proof_conclusion} 

We have now all the ingredients to complete the proof of our main result. Let $(P_{\ve})_{\ve}$ be a model of evolution with ${\bf c}$ as its admissible cost function.  Let $(\ve_n)_n$ be a mutation rate satisfying the hypothesis of Theorem~\ref{thm:main}. Let us denote by $P_n=P_{\ve_n}$ and $\pi_n=\pi_{\ve_n}$ its invariant distribution. Then, for an initial condition $X_1=x_1$ and $n\geq 2$, 
\begin{equation}\label{eq:aux3}
\begin{aligned}
 v_n-\pi^*&=\frac{1}{n}\sum_{i=1}^n [\delta_{X_i} -\pi^*]=\frac{1}{n}(\delta_{x_1} - \pi^*)+ \frac{1}{n}\sum_{i=2}^n \left [\pi_{i-1} -\pi^* +\delta_{X_{i}}(I - \Pi_{i-1}) \right  ]\\
 &=\frac{1}{n}(\delta_{x_1} - \pi^*)+ \frac{1}{n}\sum_{i=2}^n \left [\pi_{i-1} -\pi^* +\delta_{X_{i}}(Q_{i-1} - P_{i-1}Q_{i-1}) \right  ],
 \end{aligned}
\end{equation}
where we have used the Poisson equation \ref{eq:pseudo}. Note that here we see the evolution on the space of signed measures, so matrices act from the right (as row vectors with the $\ell_1$ norm).

For all $i\geq 2$, set $\epsilon_i^0=\pi_{i-1}-\pi^*$ and{\setlength{\abovedisplayskip}{4pt}%
 \setlength{\belowdisplayskip}{4pt}%
\[
\delta_{X_{i}}(Q_{i-1} - Q_{i-1}P_{i-1})= \epsilon_i^1 + \epsilon_i^2+\epsilon_i^3,
\]}
where 
\[
\begin{aligned}
\epsilon_i^1 &= \delta_{X_{i}}Q_{i-1} - \delta_{X_{i-1}}Q_{i-1}P_{i-1},\\
\epsilon_i^2&=\delta_{X_{i-1}}Q_{i-1}P_{i-1}  -  \delta_{X_{i}}Q_{i}P_{i},\\
\epsilon_i^3&= \delta_{X_{i}} \left (Q_{i}P_{i} -  Q_{i-1}P_{i-1}\right ).
\end{aligned} 
\]

In order to keep the argument as simple as possible, we let $K \geq 0$ be a constant that may change from line to line, derived from the constants $C_1,C_2,C_3$ and $C_4$ obtained in Proposition~\ref{prop:Q2}. Also, $K$ may depend on the change from the $\ell_2$-norm to the infinity norm in $\RR^\mS$.
 
First, from Proposition~\ref{prop:Q2}$(iii)$, $U_n^0:=\frac{1}{n}\sum_{i=2}^n \epsilon_i^0$ goes to zero at the rate
\begin{equation*}
\infn{U_n^0}\leq \frac{K}{n}\sum_{i=2}^n \frac{1}{(i-1)^{A\min\{\theta,1\}}}=  \frac{K}{n}\sum_{i=1}^{n-1}\frac{1}{i^{A\min\{\theta,1\}}}.
\end{equation*}

In what remains of the proof, we denote $\varphi(n):=16 \log(\log(n+2))\log(n+1)$, for $n\in \NN$ so that $1\leq \varphi(n)\leq \varphi^2(n)$. Let $(\mF_n)_{n\geq 1}$ be the natural filtration of the process. Therefore, we have that for all $i \geq 2$, $\EE(\epsilon_i^1|\mF_{i-1})=0$ and $\EE(\infn{\epsilon_i^1}^2)\leq 4\infn{Q_{i-1}}^2$. From conditional independence and Proposition~\ref{prop:Q2}(i), 
\begin{equation} \label{eq:bound_un2}
 \sum_{i=2}^n \frac{\EE\infn{\epsilon_i^1}^2}{i^2} \leq 4  \sum_{i=2}^n \frac{\infn{Q_{i-1}}^2}{i^2}\leq K  \sum_{i=2}^n  \frac{\varphi^2(i)}{i^{2-2A\ec}}< +\infty.
 \end{equation}
So, the law of large numbers for martingale differences (see e.g. \citep{chow1967strong}) implies that $U_n^1:=\frac{1}{n} \sum_{i=2}^n \epsilon_i^1$ goes to zero almost surely. 
Additionally,
\begin{equation*}
\EE\infn{U_n^1}^2 \leq \frac{4}{n^2}\sum_{i=2}^{n} \infn{Q_{i-1}}^2\leq K\frac{\varphi^2(n)}{n^2}\sum_{i=1}^{n-1}i^{2A\ec}\leq K\frac{\varphi^2(n)}{n^{1-2A\ec}},
\end{equation*}
and, from Jensen's inequality
\[
\EE\infn{U_n^1}\leq \frac{K}{n^{1/2-A\ec}}\varphi(n).
\]
On the other hand, for all $n \geq 2$, a telescoping argument yields
\[
U_n^2:=\frac{1}{n} \sum_{i=2}^n \epsilon_i^2 = \frac{1}{n}\delta_{x_1}Q_{1}P_{1} -\frac{1}{n}\delta_{X_{n}}Q_{n}P_{n}.
\]
Consequently,
$$
\begin{aligned}
\infn{U_n^2}\leq \frac{K}{n}\left(1+n^{A\ec}\varphi(n) \right )\leq \frac{K}{n^{1-A\ec}}\varphi(n),
\end{aligned}$$
and we conclude that $U_n^2 \to 0$ almost surely.  Now, using that $P_nQ_n= Q_n+\Pi_n-I$, we define
\[
U_n^3:=\frac{1}{n} \sum_{i=2}^n \epsilon_i^3 =\frac{1}{n} \sum_{i=2}^n \left [\delta_{X_{i}}(Q_i-Q_{i-i}) +\delta_{X_{i}}(\Pi_{i}-\Pi_{i-1})\right ].
\]
Invoking the following identity \citep[Proposition 3.3]{BenRai10}
\[
Q_{n}-Q_{n-1} =Q_{n}(P_{n}-P_{n-1})Q_{n-1} - (\Pi_{n}-\Pi_{n-1})Q_{n-1},
\]
we obtain from Proposition~\ref{prop:Q2} that
\begin{align}
\infn{U_n^3}&\leq \frac{1}{n}\sum_{i=2}^{n} \Big [ \infn{Q_{i}}\infn{Q_{i-1}} \infn{P_{i} -P_{i-1}} + \infn{Q_{i-1}}\infn{\pi_{i} -\pi_{i-1}} + \infn{\pi_{i} -\pi_{i-1}} \Big ]\nonumber\\ 
&\leq \frac{K}{n}\varphi^2(n)\sum_{i=1}^{n-1} \frac{i^{2A\ec}}{i^{1+\alpha}} +  \frac{K}{n}\varphi(n) \sum_{i=1}^{n-1} \frac{i^{A\ec}}{i^{1+\alpha -A\ec}}\nonumber \\
&\leq \frac{K}{n}\varphi^2(n)\sum_{i=1}^{n-1} \frac{1}{i^{1+\alpha-2A\ec}}. 
\label{eq:bound_un3}
\end{align}
Collecting the bounds, from \eqref{eq:aux3}, and the fact that $1/2-A\ec < 1-A\ec< 1$, we get that
\[
\begin{aligned}
\EE \left (\infn{v_n -\pi^*} \right )&\leq \frac{1}{n}+ \sum_{i=0}^3 \EE \left (\infn{U_n^i}\right )\\
&\leq K\varphi^2(n)\left ( \frac{1}{n^{1/2-A\ec}} +\frac{1}{n}\sum_{i=1}^{n-1}\left [ \frac{1}{i^{A\min\{\theta,1\}}}+\frac{1}{i^{1+\alpha-2A\ec}}\right ]\right )\\
&\leq K\varphi^2(n)\left ( \frac{1}{n^{1/2-A\ec}} +\frac{1}{n}\sum_{i=1}^{n-1}\frac{1}{i^{A\min\{\theta,1\}}}\right ),
\end{aligned}
\]
where in the last line we used that $1+\alpha-2A\ec>1/2-A\ec$. Finally, if $A\min\{\theta,1\}>1$, then $\sum_{i=1}^{n-1}i^{-A\min\{\theta,1\}}$ is bounded and we conclude that $\EE(\infn{v_n -\pi^*})=\widetilde O(n^{-(1/2 - A\ec)} + n^{-1})$. On the other hand, if $A\min\{\theta,1\}\leq 1$, one has that $\frac{1}{n}\sum_{i=1}^{n-1}i^{-A\min\{\theta,1\}}=  \widetilde O(n^{-A\min\{\theta,1\}})$ and the proof is finished by noting that $1/2 - A\ec=A\min\{\theta,1\}$ if $A=\Acrit$. $\qed$

\vspace{2ex}
\noindent \textbf{Acknowledgments.}  The authors gratefully acknowledge the hospitality of the Institut de Mathématiques of the Université de Neuchâtel during the development of this project. The work of M. Bravo was partially supported by FONDECYT Grant No.1241805.  M. Faure acknowledges financial support from the French government under the “France 2030” investment plan managed by the French National Research Agency Grant ANR-17-EURE-0020, and by the Excellence Initiative of Aix-Marseille University - A*MIDEX.

\begin{appendix}
\section{Proof of Formula (\ref{eq:energy_ex1})} \label{app:energy_example}

\noindent \textsc{Case} $a\geq b$. Given that for all $x$, there is only one element in $\mG_x$, we have that for all $x \in \mS$
\[
\tV(x)= (N-|x|)b + Nb + a|x|=2Nb + (a-b)|x|,
\]
and therefore $ V(x)=(a-b)|x|$. On the other hand, for all $x=-N,\ldots, N-1$
\[
W(x,x+1)= W(x,x+1)=\min \{ (a-b)|x| + b,(a-b)(|x|-1) + a  \}= (a-b)|x|+b,
\]
then $\Elev(x,y)=  \max \{(a-b)|x|+b,(a-b)|y|+b \}$ and
\[\ec= \max_{x,y \in \mS}\,\left \{  \max \{(a-b)|x|+b,(a-b)|y|+b \} - (a-b)|x| - (a-b)|y| \right \}.\]
The previous expression attains its maximum at pairs $(x,0)$ and $(0,y)$ and therefore $\ec=b$.

\noindent \textsc{Case} $a < b$. Analogously to the previous case and given that the expression for $\tV$ is the same, we obtain $V(x)= (b-a)(N-|x|)$. Again, for all $x=-N,\ldots, N-1$
\[
W(x,x+1)=\min \{ (b-a)(N-|x|) + a,(b-a)(N-|x|+1) + b  \}= (b-a)(N-|x|) + a.
\]
For $x<y$, let $a(x,y)$ the value in $\mS$ where $\Elev(x,y)$ is attained, that is, the point $z \in \mS$ such that $|z|$ is minimal and that the edge $(z,z+1)$ belongs to the (unique) path linking $x$ and $y$.  Hence
\[
\begin{aligned}
\ec&= \max_{x,y \in \mS}\,\left \{  (b-a)(N-|a(x,y)|) + a - (b-a)(N-|x|) - (b-a)(N-|y|) \right \},\\
&= (b-a)\max_{x,y \in \mS}\,\left \{ |x|+ |y| - |a(x,y)|\right \}-(b-a)N + a.
\end{aligned}
\]
This expression is maximized at pairs $(-N,N)$ and $(N,-N)$, since in that case $a(x,y)=0$, and therefore $\ec=(b-a)N+a$.

\section{Proof of Theorem \ref{thm:comparison}}\label{app:thm:comparison}
 Recall that $\Gamma_{z',z}$ is the set of paths (with possible infinite cost) from $z'$ to $z$.   Let $x,y \in \mS$, $x\neq y$ and $z \in \mC$, the union of the recurrent classes induced by $\Plim$.  It is sufficient to prove that there exists $\hat{\gamma} \in \Gamma_{x,y}$ such that
\[\max_{e \in \hat{\gamma}} W(e) -V(x) - V(y) \leq \CR(z)=\max_{z' \neq z } \min_{\gamma \in \Gamma_{z',z}} \co(\gamma),\]
and we assume without loss of generality that $x \neq z$. By definition of $\CR(z)$ there exists $\gamma_x =(x_0,x_1,...,x_m) \in \Gamma_{x,z}$ and $\gamma_y = (y_0,y_1,...,y_\ell) \in \Gamma_{y,z}$ such that $\co(\gamma_x) \leq \CR(z), \; \; \co(\gamma_y) \leq \CR(z)$,
where $\gamma_y$ is possibly empty if $y=z$ with $\co(\gamma_y)=0$ in that case. Let 
$$\hat{\gamma} := (x_0,...,x_{m-1},z,y_{\ell-1},...,y_0) \in \Gamma_{x,y}$$ and $g_x$ be an $x$-tree such that 
$$\min_{g'\in \mG_x}\co(g')=\co(g_x)=\tV(x).$$ 

For all $i \in \{1,...,m-1\}$, the idea is to construct an $x_i$-tree, $g_{x_i} \in \mG_{x_i}$, by adding the subpath $(x_0,x_1)(x_1,x_2)\ldots(x_{i-1},x_i)$ of $\gamma_x$ to $g_x$, while removing some edges to ensure that the result is in $\mG_{x_i}$. Explicitly, we modify $g_x$ as follows:

\noindent $\bullet$ For $j=1,...,i$, add the edge $(x_{j-1},x_{j})$  and remove the (unique) edge starting from $x_j$ in $g_x$.

We then have
\[\co(g_{x_i}) \leq \tV(x) + \sum_{j=1}^{i} \co(x_{j-1},x_{j}).\]
Using that $\min_{g'\in \mG_{x_i}}\co(g')=\tV(x_i)\leq \co(g_{x_i})$ and adding $\co(x_i,x_{i+1})$ on both sides, we get
\[\tV(x_i) + \co(x_i,x_{i+1}) \leq \tV(x) + \sum_{j=0}^{i+1} \co(x_j,x_{j+1}) \leq \tV(x) + \co(\gamma_x) \leq  \tV(x) + \CR(z).\]
Using that $\tV(x_i) -\tV(x) = V(x_i) - V(x)$, and $W(x_i,x_{i+1}) \leq V(x_i) + \co(x_i,x_{i+1})$  we obtain
\begin{equation*} 
W(x_i,x_{i+1}) \leq V(x_i) + \co(x_i,x_{i+1}) \leq V(x) + \CR(z), \quad i=0,\ldots, m-1,
\end{equation*}
where for $i=0$ the inequality holds since $x_0=x \neq z$. If $y=z$, the result is already proven. 

Assuming that $\ell \geq 1$, the same argument applied now to the path $\gamma_y$ gives, for $j=0,...,\ell-1$,
\begin{equation*}
 W(y_{j+1},y_j)=W(y_{j},y_{j+1}) \leq V(y_{j}) + \co(y_{j},y_{j+1}) \leq V(y) + \co(\gamma_y) \leq V(y) +\CR(z),
\end{equation*}
from the symmetry of the potential $W$. Finally, using the two inequalities above, we get
\[\max_{e \in \hat \gamma} W(e) \leq \max \{V(x),V(y)\} + \CR(z),\]
and the conclusion follows. $\qed$

\section{Proof of Proposition \ref{prop:tech}}\label{app:prop:tech}
 We invoke the following well-known explicit formula for $\pi_\ve$  (see e.g.  \citep{Cat99}, Lemma 3.2),
\begin{equation} \label{eq:pi_catoni}
\pi_{\ve}(x) = \dfrac{\sum_{g_x \in \mG_x} \prod_{e \in g_x} P_{\ve}(e)}{\sum_{y \in \mS} \sum_{g_y \in \mG_y} \prod_{e \in g_y} P_{\ve}(e)},\quad x \in \mS,
\end{equation}
where we want to estimate the behavior of $\pi_{\ve}(x)$ as $\ve$ is close to zero for $x \in \mS$. Let us define, for all $x \in \mS$ and $g_x \in \mG_x$ 
\[
P_{\ve}(g_x):=\prod_{e \in g_x}P_{\ve}(e)= \ve^{\co(g_x)} K^{g_x}(\ve), \,\,\,\text{where} \quad K^{g}(\ve):= \prod_{e \in g} k(e)(1+h_e(\ve)).
\]
In what follows, we assume without loss of generality that $h_e:[0,1]\to [-1/2,1/2]$ and we let $L$ be a uniform bound for the corresponding Lipchitz constants. Observe that, by construction, the functions $ K^{g}$ are lower bounded by some strictly positive quantity since $K^{g}(0)= \prod_{e \in g} k(e)>0$. 

Recalling that $\co_0$ is the minimal cost among all trees, we have that, for all $x \in \mS$,
\[
\sum_{g_x \in \mG_x}P_{\ve}(g_x)= \ve^{\tV(x)}\Big (\sum_{g_x \in \mG_x^0} K^{g_x}(\ve) + \sum_{g_x \in \mG_x^1}\ve^{\co(g_x)-\tV(x)}K^{g_x}(\ve)\Big ),
\]
and
\[
\begin{aligned}
\sum_{y \in \mS}\sum_{g_y \in \mG_y}P_{\ve}(g_y)= \ve^{\co_0}\sum_{y \in \mS_0}\Big (\sum_{g_y \in \mG_y^0} K^{g_y}(\ve) &+ \sum_{g_y \in \mG_y^1}\ve^{\co(g_y)-\co_0}K^{g_y}(\ve)\Big ) +\\
&+\ve^{\co_0}\sum_{y \in \mS_1}\sum_{g_y \in \mG_y}\ve^{\co(g_y)-\co_0}K^{g_y}(\ve).
\end{aligned}
\]
Therefore, using that $V(x)=\tV(x)-\co_0$, the expression for $\pi_\ve$ in \eqref{eq:pi_catoni} becomes
\begin{equation} \label{eq:piexp} 
    \pi_\ve(x)=\ve^{V(x)}\dfrac{\sum_{ \mG_x^0} K^{g_x}(\ve) + \sum_{\mG_x^1}\ve^{\co(g_x)-\tV(x)}K^{g_x}(\ve)}{\sum_{ \mS_0}\sum_{\mG_y^0} K^{g_y}(\ve) + \sum_{\mS_0}\sum_{\mG_y^1}\ve^{\co(g_y)-\co_0}K^{g_y}(\ve)+\sum_{\mS_1}\sum_{\mG_y}\ve^{\co(g_y)-\co_0}K^{g_y}(\ve)},
\end{equation}
for all $x \in \mS$. Noting that all the exponents of $\ve$ in \eqref{eq:piexp} are strictly positive and taking $\ve \to 0$ lead to formula \eqref{eq:pi_explicit}.

For the remaining part of the proof, we denote by $\pi_\ve^*$ the distribution obtained by evaluating \eqref{eq:pi_explicit} at some $\ve >0$, that is, 
\[
\pi_\ve^*(x)= \begin{cases}\dfrac{\sum_{g_x \in \mG_x^0} K^{g_x}(\ve)}{\sum_{y \in \mS_0}\sum_{g_y \in \mG_y^0} K^{g_y}(\ve)}, & x \in \mS_0,\\[1ex]
\phantom{0000000}0 &\text{otherwise}.
\end{cases}
\]
Lemma~\ref{lem:pi_eps} below shows that there exists $C \geq 0$ such that $\infn{\pi_\ve^*-\pi_\ve}\leq C \ve^{\theta}$. Now, using that $ K^{g}(\ve)= \prod_{e \in g} k(e)(1+h_e(\ve))$, a routine computation shows that $|\pi^{*}_{\ve}(x)'|\leq 2L\overline C \ve$, 
for $x \in \mS_0$, where we assume for simplicity  that $h_e$ is differentiable. The conclusion follows from the fact that $\infn{\pi_\ve -\pi^*}\leq \infn{\pi_\ve -\pi^*_\ve}+ \infn{\pi_\ve^* -\pi^*},$ and taking $C= 2\overline C(L+1)$. $\qed$

\begin{lemma}\label{lem:pi_eps} There exists $C \geq 0$ such that $\infn{\pi_\ve^*-\pi_\ve}\leq C \ve^{\theta}$.
\end{lemma}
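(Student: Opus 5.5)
The plan is to compare the two ratios directly, starting from \eqref{eq:piexp} and the definition of $\pi^*_\ve$. For $x\in\mS_0$, write $a_x(\ve):=\sum_{g_x\in\mG_x^0}K^{g_x}(\ve)$ and $D(\ve):=\sum_{y\in\mS_0}a_y(\ve)$. With this notation $\pi^*_\ve(x)=a_x(\ve)/D(\ve)$. By \eqref{eq:piexp}, for $x\in\mS_0$ we have $\pi_\ve(x)=(a_x(\ve)+r_x(\ve))/(D(\ve)+R(\ve))$, where
\[
r_x(\ve)=\sum_{g_x\in\mG_x^1}\ve^{\co(g_x)-\co_0}K^{g_x}(\ve),\qquad R(\ve)=\sum_{y\in\mS}\sum_{g_y\in\mG_y\cap\mG^1}\ve^{\co(g_y)-\co_0}K^{g_y}(\ve).
\]
Here $R$ collects all nonoptimal trees, both those rooted in $\mS_0$ and those rooted in $\mS_1$, since $\co(g)-\co_0=\co(g)-\tV(y)+V(y)$. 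For $x\in\mS_1$ we have $\pi^*_\ve(x)=0$ and $\pi_\ve(x)=\ve^{V(x)}(\cdots)/(D+R)$.

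The key observation is that every tree appearing in $r_x$ or $R$ belongs to $\mG^1$, and $\ve\le 1$. By the definition \eqref{eq:theta} of $\theta$, each such tree therefore carries a factor $\ve^{\co(g)-\co_0}\le\ve^{\theta}$. If $\mG^1=\emptyset$, then $r_x\equiv R\equiv 0$ and $\mS_1=\emptyset$, so $\pi_\ve=\pi^*_\ve$ and there is nothing to prove.

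Next I bound the $K^g$. Since $h_e$ takes values in $[-1/2,1/2]$, each $K^g(\ve)$ lies between $\prod_e k(e)/2^{|\mS|}$ and $\prod_e k(e)(3/2)^{|\mS|}$. Hence $D(\ve)\ge \underline D>0$ uniformly in $\ve$, and $|r_x(\ve)|,|R(\ve)|\le \overline K\,\ve^{\theta}$, where $\overline K$ is a constant (the number of trees times the maximal $K^g$).

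For $x\in\mS_0$ I then write
\[
\pi_\ve(x)-\pi^*_\ve(x)=\frac{r_x D-a_x R}{D(D+R)}.
\]
Since $R\ge 0$ the denominator is at least $\underline D^2$, and the numerator is at most $2\overline K\,\overline D\,\ve^\theta$, giving an $O(\ve^\theta)$ bound. For $x\in\mS_1$, every tree rooted at $x$ lies in $\mG^1$, so the numerator of $\pi_\ve(x)$ in the unnormalized form is at most $\overline K\ve^{\theta}$ and the denominator is at least $\underline D$. Summing over the finitely many states gives the $\ell_1$ (or sup) bound with an explicit constant $C$.

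The only delicate point is the bookkeeping. I must check that, for $x\in\mS_1$, the factor $\ve^{V(x)}$ combined with the trees in $\mG_x^0$ still produces exponent $\co(g)-\co_0\ge\theta$. This holds because those trees have cost $\tV(x)>\co_0$ and are therefore nonoptimal, i.e. they lie in $\mG^1$. I must also confirm that the lower bound on $D$ uses only optimal trees, which exist for some $y\in\mS_0$. I do not expect any genuine obstacle beyond this.
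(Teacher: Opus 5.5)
Your proof is correct and takes the same route as the paper: your single fraction $(r_xD-a_xR)/(D(D+R))$ is exactly the paper's split $\pi^*_\ve(x)-\pi_\ve(x)=\varphi^1_\ve(x)+\varphi^2_\ve(x)$, with $\varphi^1_\ve(x)=\pi^*_\ve(x)R/(D+R)$ and $\varphi^2_\ve(x)=-r_x/(D+R)$. Both arguments rest on the same observation that every tree appearing in $r_x$, in $R$, or in $\pi_\ve(x)$ for $x\in\mS_1$ lies in $\mG^1$ (the paper phrases the last case as $V(x)\geq\theta$), and your handling of $\mG^1=\emptyset$, where $r_x\equiv R\equiv 0$ and hence $\pi_\ve=\pi^*_\ve$, is actually cleaner than the paper's remark on that case.
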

\proof{Proof}Let us start assuming that $x \in \mS_0$, that is that $\mG_x$ contains an $x$-tree of optimal cost among all trees. In that case $V(x)=0$ and $\tV(x)=\co_0$. A simple algebraic manipulation allows us to write
\[
\pi_\ve^*(x)-\pi_\ve(x)= \varphi^1_\ve(x)+\varphi^2_\ve(x),
\]
where
\begin{align*}
\varphi^1_\ve(x)&=\pi^*_\ve(x) \dfrac{\sum_{\mS_0}\sum_{\mG_y^1}\ve^{\co(g_y)-\co_0}K^{g_y}(\ve)+\sum_{\mS_1}\sum_{\mG_y}\ve^{\co(g_y)-\co_0}K^{g_y}(\ve)}{\sum_{ \mS_0}\sum_{\mG_y^0} K^{g_y}(\ve) +\sum_{\mS_0}\sum_{\mG_y^1}\ve^{\co(g_y)-\co_0}K^{g_y}(\ve)+\sum_{\mS_1}\sum_{\mG_y}\ve^{\co(g_y)-\co_0}K^{g_y}(\ve)},\\ 
\varphi^2_\ve(x)&=-\dfrac{\sum_{\mG_x^1}\ve^{\co(g_x)-\co_0}K^{g_x}(\ve)}{\sum_{ \mS_0}\sum_{\mG_y^0} K^{g_y}(\ve) + \sum_{\mS_0}\sum_{\mG_y^1}\ve^{\co(g_y)-\co_0}K^{g_y}(\ve)+\sum_{\mS_1}\sum_{\mG_y}\ve^{\co(g_y)-\co_0}K^{g_y}(\ve)}. 
\end{align*}
Recall that sums over $ \mG_y^1$ are zero by definition if $\mG_y^1=\emptyset$.
Then, 
\[
\begin{aligned}
|\varphi^1_\ve(x)|&=\pi^*_\ve(x)  \frac{\sum_{\mS_0}\sum_{\mG_y^1}\ve^{\co(g_y)-\co_0}K^{g_y}(\ve)+\sum_{\mS_1}\sum_{\mG_y}\ve^{\co(g_y)-\co_0}K^{g_y}(\ve)}{\sum_{ \mS_0}\sum_{\mG_y^0} K^{g_y}(\ve) +\sum_{\mS_0}\sum_{\mG_y^1}\ve^{\co(g_y)-\co_0}K^{g_y}(\ve)+\sum_{\mS_1}\sum_{\mG_y}\ve^{\co(g_y)-\co_0}K^{g_y}(\ve)}\\
&\leq \ve^{\theta} \frac{\sum_{\mS_0}\sum_{\mG_y^1}K^{g_y}(\ve)+\sum_{\mS_1}\sum_{\mG_y}K^{g_y}(\ve)}{\sum_{ \mS_0}\sum_{\mG_y^0} K^{g_y}(\ve)}\leq \overline C \ve^\theta,
\end{aligned}
\]
with 
\[
\overline C:= \max_{\ve \in [0,1]}\frac{\sum_{\mS_0}\sum_{\mG_y^1}K^{g_y}(\ve)+\sum_{\mS_1}\sum_{\mG_y}K^{g_y}(\ve)}{\sum_{ \mS_0}\sum_{\mG_y^0} K^{g_y}(\ve)},
\]
and $\theta$ is the tree-optimality gap defined in \eqref{eq:theta}, using that $\left(\cup_{y \in \mS_0} \mG_y^1 \right)\cup \left(\cup_{y \in \mS_1} \mG_y\right)=\mG^1$. If $\mG^1 =\emptyset$  then $\mS_0=\mS$ and $\tV$ is constant. Hence $\varphi^1_\ve(x)=\pi^*_\ve(x)$ and $\varphi^2_\ve(x)=0$ for all $x\in \mS$ and the formula is valid with $\theta =+\infty$.
Similarly, 
\[
\begin{aligned}
|\varphi_\ve^2(x)|\leq \ve^{\theta} \frac{\sum_{g_x \in \mG_x^1}K^{g_x}(\ve) }{\sum_{y \in \mS_0}\sum_{g_y \in \mG_y^0} K^{g_y}(\ve)}\leq \overline C\ve^{\theta}.
\end{aligned}
\]
We conclude that for all $x \in \mS_0$, $|\pi_\ve(x) -\pi^*_\ve(x)|\leq |\varphi^1_\ve(x)|+ |\varphi^2_\ve(x)|\leq 2 \overline C\ve^\theta$.

It remains to prove that the bound holds when $x \in \mS_1$. In such case, $V(x)>0$ and $\pi_\ve^*(x)=0$. Using that $\ve \leq 1$ and that $\mG_x= \mG_x^0 \cup \mG_x^1$, we have
\[
|\pi_\ve(x) -\pi^*_\ve(x)|\leq \ve^{V(x)} \frac{\sum_{g_x \in \mG_x} K^{g_x}(\ve)}{\sum_{y \in \mS_0}\sum_{g_y \in \mG_y^0} K^{g_y}(\ve)}\leq c\ve^{V(x)}.
\]
Notice that $V(x) \geq \theta$ since $V(x)=\co(g_x) -\co_0$ for some $g_x \in \mG_x^0$, which does not contains globally optimal trees by definition. In conclusion $\infn{\pi_\ve -\pi^*_\ve}\leq 2 |\mS| \overline C \ve^{\theta}$, as claimed (seen as the $\ell_1$ norm). $\qed$

\section{Proof of Theorem \ref{thm:spectral}}\label{app:thm:spectral} Recall that $\BFGamma$ stands for the set of routing functions and that $\pi_\ve$ is the invariant probability distribution of $P_\ve$. By Proposition~\ref{prop:spectral_gap}, we have, for any $\ve > 0$ and every $\BFgamma \in \BFGamma$,
\[\lambda(P_{\ve}) \geq \frac{1}{\kappa_{\ve}(\BFgamma)},\]
where
\[\kappa_{\ve}(\BFgamma) := \sup_{e=(x_{-},x_{+}) \in E_M}\left\{ \frac{1}{\pi_{\ve}(x_-)M_{\ve}(x_-,x_+)} \sum_{(x,y): \, e \in \BFgamma(x,y)}|\BFgamma(x,y)| \pi_{\ve}(x) \pi_{\ve}(y)  \right\},\]
and $M_\ve(x,y)=\frac{1}{2}(P_\ve(x,y) + P_\ve^*(x,y))$. Hence, we aim to find some routing function $\overline \BFgamma \in \BFGamma$ such that $\kappa_{\ve}(\overline \BFgamma) \leq C^{-1}\ve^{-\ec}$ for some constant $C>0$ and all $\ve \in (0,1)$. From \eqref{eq:piexp}, there exist constants
$\underline{K},\overline{K}>0$ such that
\[
\underline{K}\,\ve^{V(x)} \leq \pi_\ve(x) \leq \overline{K}\,\ve^{V(x)}, \qquad x\in\mS.
\]
Thus,  recalling that $|h_{x,y}(\ve)|\leq 1/2$, we obtain, for $x,y$ such that $\co(x,y)<\infty$,
\[
\frac{1}{2}\,\underline{K}\,\ve^{V(x)+\co(x,y)} \leq \pi_\ve(x)P_\ve(x,y),
\qquad\text{and}\qquad
\pi_\ve(x)\pi_\ve(y) \leq \frac{9}{4}\,\overline{K}\,\ve^{V(x)+V(y)},
\]
for all $x,y\in\mS$. This, combined with the irreducibility property, gets that for all admissible edge $e=(x_{-},x_+) \in E_M$,
\[
\begin{aligned}
\pi_{\ve}(x_{-}) M_{\ve}(x_{-},x_{+}) &= \frac{1}{2} \left(\pi_{\ve}(x_{-}) P_{\ve}(x_{-},x_{+}) + \pi_{\ve}(x_{+}) P_{\ve}(x_{+},x_{-}) \right)\\
&\geq \frac{1}{2}\max \{\pi_{\ve}(x_{-}) P_{\ve}(x_{-},x_{+}), \pi_{\ve}(x_{+}) P_{\ve}(x_{+},x_{-}) \}\\
&\geq \frac{1}{4}\underline{K}\max\{\ve^{V(x_{-})+\co(x_{-},x_{+})}, \ve^{V(x_+)+\co(x_{+},x_{-})} \}\\
&\geq \frac{1}{4}\underline{K}\ve^{\min \{V(x_{-})+\co(x_{-},x_{+}),V(x_+)+\co(x_{+},x_{-})\}}\\
&=\frac{1}{4}\underline{K} \ve^{W(x_{-},x_{+})}.
\end{aligned}
\]
Consequently, for any $e=(x_{-},x_+)\in E_M$, and any routing function $\BFgamma \in \BFGamma$,
\[
\frac{1}{\pi_{\ve}(x_-)M_{\ve}(x_-,x_+)} \sum_{\substack{x,y \in \mS:\\ e \in \BFgamma(x,y)}}|\BFgamma(x,y)| \pi_{\ve}(x) \pi_{\ve}(y)\leq \frac{9\overline K}{\underline K}\sum_{\substack{x,y \in \mS:\\ e \in \BFgamma(x,y)}} |\BFgamma(x,y)|\ve^{V(x) + 
 V(y)-W(e)}.
\] 
 Now let $\overline \BFgamma \in \BFGamma$ be such that  $\overline \BFgamma(x,y)$ is a minimizing path in the sense that
\[
\Elev(x,y) = \max_{e \in \overline \BFgamma(x,y)} W(e), \quad  \text{ for all} \,\, (x,y) \in \mS \times \mS.
\]
Recalling that $\ec=\max_{x,y} \Elev(x,y)-V(x) -V(y)$, we finally have that
\[\kappa_{\ve}(\overline \BFgamma) \leq  \frac{9\overline K}{\underline K} |\mS|^3\ve^{-\ec}:=C^{-1}\ve^{-\ec}.\qquad  
\]

\section{Proof of Proposition \ref{prop:Q2}} \label{app:prop:Q2}
In order to make computations easier, we assume for simplicity $\ve_n =  n^{-A}$ for all $n \geq 1$. For a mutation rate $(\ve_n)_n$ that is $A$-vanishing, all the calculations presented below will be essentially the same, except that the constants will be slightly different.

We know from Theorem~\ref{thm:spectral} that there exists $C \geq 0$ such that
\[
\lambda(P_n) \geq C \ve_n^{\ec}= \frac{C}{n^{A\ec}}.
\]
Now, estimates due to \citep[Chapter 2]{Sal97} (see also  \citep[Proposition 3.4]{BenRai10}) allow to bound the rate in which the infinity norm of the pseudo-inverse matrix $Q_n$ of $P_n$ increases. Precisely, we have that
\[
 \sum_{y \in \mS}|Q_n(x,y)| \leq  \dfrac{|\mS|}{\lambda(P_n)} \left \{\log_+ \left ( \log \left ( \frac{1}{\overline \pi_n}\right )\right )\left ( \frac{\log \left (\frac{1-\overline\pi_n}{\overline \pi_n} \right )}{1-2\overline\pi_n}\right ) +e \right  \}, \quad x\in \mS,
 \]
 where $\overline \pi_n= \min_{x \in \mS}\pi_n(x)$ and $\log_+ (t)= \max \{0, \log(t) \}$. Using the notation introduced in the proof of Theorem~\ref{thm:spectral}, and calling $\overline v=\max_{x \in \mS}V(x)$, we have that, for all $n \geq 1$, 
\[
\underline{K} \ve_n^{\overline v}= \frac{\underline{K}}{n^{A \overline v}}\leq \overline\pi_n\leq \frac{1}{2}.
\]
Assuming that $\overline v>0$, we use the trivial bound $\frac{\log \left (\frac{1-t}{t} \right )}{1-2t}\leq \frac{2\log \left (\frac{1}{t} \right )}{1-t}$ for $t\in (0,1/2]$, to get 
\[
\frac{\log \left (\frac{1-\overline\pi_n}{\overline\pi_n} \right )}{1-2\overline\pi_n}\leq \frac{2\log \left (\frac{1}{\overline\pi_n} \right )}{1-\overline\pi_n}\leq 4\log(n^{A \overline v}/\underline{K}).
\]
Hence 
\[
 \infn{Q_n} \leq
 \dfrac{|\mS|}{C} n ^{A \ec}(4\log_+(\log(n^{A \overline v}/\underline{K} ) \log(n^{A \overline v}/\underline{K}) +e),
\]
which we cast as 
\[
 \infn{Q_n} \leq
 C_1  n ^{A \ec}\log(\log(n+2))\log(n+1)
\]
for an appropriate constant $C_1$.  Observe that if $\overline v=0$, then the logarithmic dependence on $n$ can be avoided since $\overline\pi_n$ is lower bounded by $\underline{K}$. 

We now turn to point $(ii)$. Recall that we assume for simplicity that $h_e:[0,1]\to [-1/2,1/2]$ and we let $L$ be a uniform bound for the corresponding Lipchitz constants. 

Let $x,y$ be such that $\co(x,y) = 0$. In that case,
$P_{\ve}(x,y) = k(x,y) (1 + h_{x,y}(\ve))$, so that
\[\left|P_{n+1}(x,y) - P_n(x,y) \right|  =  |P_{\ve_{n+1}}(x,y) - P_{\ve_n}(x,y)| \leq  A L k(x,y) |\ve_{n+1} - \ve_n| \leq \frac{ Lk(x,y)}{n^{1+A}}.\]

If $\co(x,y) = c >0$, then $P_{\ve}(x,y) = k(x,y) \ve^c (1 + h_{x,y}(\ve))$. So, for $\ve \in (0,1)$, $|\phi'(\ve)|\leq 2\ve^{c-1} + L\ve ^c \leq (2+L)\ve^{c-1}$, with $\phi(\ve)= \ve^c (1+h_{x,y}(\ve))$. Hence 
\[\left|P_{n+1}(x,y) - P_n(x,y) \right|  \leq (2+L)k(x,y) \ve_n^{c-1}|\ve_{n+1} - \ve_n|\leq A\frac{(2+L)k(x,y)}{n^{1+A c}}.\]

 Combining the previous bounds, we get 
\[
\infn{P_{n+1}- P_n} \leq \infn{k}\frac{A(2+L)}{n^{1+\alpha}},
\]
where $\alpha$ is defined in Equation~\eqref{eq:alpha}. Part $(iii)$ follows directly from Proposition~\ref{prop:tech} and the precise form of the mutation rate. Finally, to prove $(iv)$, we observe that, by definition,
$$(\pi_{n+1}-\pi_n)(I-P_n)=\pi_{n+1}(P_{n+1}-P_n).$$

So, multiplying by $Q_n$ and using the definition of the pseudo-inverse matrix \eqref{eq:pseudo}, we get
$\pi_{n+1}-\pi_n= \pi_{n+1}(P_{n+1}-P_n)Q_n$, 
and then
\[
\infn{\pi_{n+1}-\pi_n}\leq \infn{P_{n+1}-P_n}\infn{Q_n}\leq C_1C_3 \frac{\log(\log(n+2))\log(n+1)}{n^{1+\alpha - A\ec}}. \qquad \qed
\]


\section{Proof of Proposition~\ref{prop:cloez}}\label{app:examples}
For the sake of clarity, we start with the simpler case given by Example~\ref{ex:Dietz} by slightly modifying the argument given in Section~\ref{sec:proof_conclusion}. Let us then adopt all the notation introduced there. Recall that $V\equiv 0$, every $x$-tree has cost $N-1$, and $\theta=+\infty$, so that \eqref{eq:piexp} becomes
\begin{equation*} 
    \pi_\ve(x)=\frac{\sum_{ g_x\in \mG_x} K^{g_x}(\ve) }{\sum_{y \in  \mS}\sum_{g_y\in \mG_y} K^{g_y}(\ve)},
\end{equation*}
where, by definition, $K^{g_x}(\ve)=\prod_{e \in g_x} k(e)(1+h_e(\ve))=\prod_{e \in g_x} k(e)$, since $h_e\equiv 0$. Consequently $ \pi_\ve=\pi^*$ for all $\ve \in (0,1]$ and therefore $U_n^0\equiv 0$.

Since $\ec=1$, the bound on $U_n^2$ becomes  $\infn{U_n^2}\leq K/n^{1-A}$. Moreover,
since $\co(x,y)=1$ for all $x \neq y$ and $\alpha=A$, the bound \eqref{eq:bound_un3} is now
\[
\infn{U_n^3}\leq \frac{K}{n}\sum_{i=1}^{n-1} \frac{1}{i^{1-A}}\leq \frac{K}{n^{1-A}}.
\]
Let us show that $U_n^1$ vanishes almost surely for $A<1$. In this model $P_\ve$ approaches the identity matrix as $\infn{I-P_\ve}=2\max_{x\in \mS}\{1-P_\ve(x,x)\}=2\ve\max_{x\in \mS}\sum_{y \neq x}k(x,y)\leq 2 \infn{k}\ve$. Thus
\[
\begin{aligned}
\EE(\infn{\epsilon_i^1}^2| X_{i-1} = x)&=\infn{\delta_{x}Q_{i-1}(I-P_{i-1})}^2P_{i-1}(x,x) + \sum_{y\neq x}\infn{\delta_yQ_{i-1}-\delta_{x}P_{i-1}Q_{i-1}}^2P_{i-1}(x,y)\\[-1ex]
&\leq \infn{Q_{i-1}}^2\infn{I-P_{i-1}}^2 + 4\infn{Q_{i-1}}^2\ve_{i-1}\\
&\leq  \infn{Q_{i-1}}^2(4 \infn{k}^2\ve_{i-1}^2+ 4\ve_{i-1})\\
&\leq 8 \infn{Q_{i-1}}^2 \ve_{i-1},
\end{aligned}
\]
where we have assumed without loss of generality that $\infn{k}^2 \ve_{i-1}\leq 1$. Taking expectation in the inequality above, we obtain
\begin{equation*}
 \sum_{i=2}^n \frac{\EE(\infn{\epsilon_i^1}^2)}{i^2} \leq 8  \sum_{i=2}^n \frac{\infn{Q_{i-1}}^2\ve_{i-1}}{i^2}\leq 8\sum_{i=2}^n  \frac{i^{2A-A}}{i^{2}}=8\sum_{i=2}^n  \frac{1}{i^{2-A}}< +\infty,
 \end{equation*}
and hence $U_n^1 \to 0$, almost surely. Also,
\begin{equation*} 
\EE(\infn{U_n^1}^2) \leq \frac{8}{n^2}\sum_{i=2}^{n} \infn{Q_{i-1}}^2\ve_{i-1}\leq \frac{8}{n^{1-A}},\quad \text{and thus}  \quad \EE\infn{U_n^1} \leq \frac{2 \sqrt{2}}{n^{(1-A)/2}}.
\end{equation*}

Putting everything together, we get that $v_n \to \pi^*$ almost surely. Finally, as claimed, the convergence rate in this case can be estimated now as
\[
\begin{aligned}
\EE \left (\infn{v_n -\pi^*} \right )&\leq \frac{1}{n}+ \sum_{i=0}^3 \EE \left (\infn{U_n^i}\right ) =O\left (  \frac{1}{n^{(1-A)/2}}\right ).\\
\end{aligned}
\]
We now turn to the more general case of Example~\ref{ex:cloez}. Let us write, for $x \neq y$,
\[
\widetilde P_n(x,y)=\begin{cases}
     \ve_n(k(x,y) +\err_n(x,y)),   & \text{if}\quad  k(x,y) >0,\\
      \ve_n \,\err_n(x,y)   & \text{if}\quad  k(x,y) =0.
 \end{cases}
\]

As we mentioned, this chain cannot be written directly as an inhomogeneous model induced by a homogeneous counterpart, because of the vanishing terms $\err_n(x,y)$. However, all our arguments can be adapted to this setting by means of classical perturbation arguments.

Indeed, consider the homogeneous model $( P_\ve)_\ve$ with cost given by $\co(x,x)=0$, $\co(x,y)=1$ if $k(x,y)>0$, and $+\infty$ otherwise. Notice that, in order to fit our definition, we may assign an arbitrary value $k(x,y)>0$ whenever the cost is not finite. Therefore, by irreducibility of $k$, we have $\tV(x)=N-1$, $V\equiv 0$, $\ec=1$, and $\theta=+\infty$.

Let us denote $P_{n}:= P_{\ve_n}$, with $Q_n$ its pseudo-inverse, $\pi_n \equiv \pi^*$ the invariant distribution, and we set $\err_n=\max_{x \neq y}|\err_n(x,y)|$. Using that $|P_n(x,y)-\widetilde P_{n}(x,y)|=O(\ve_n \err_n)$ for all $x,y \in \mS$, and that $\infn{Q_n}\leq K\ve_n^{-1}$, we obtain
$$\infn{\widetilde \pi_n-\pi^*}\leq \infn{\widetilde P_{n}-P_n}\infn{Q_n}\leq K \err_n,$$
where $\widetilde \pi_n$ is the invariant distribution of $\widetilde P_n$, and $K$ is a positive constant that may change from line to line to avoid notation clutter. Also, for all $x \neq y$,
\[
\scalebox{0.98}{$|\pi^*(x) P_n(x,y) \!-\!\widetilde \pi_n(x) \widetilde P_n(x,y)|\leq \pi^*(x) |P_n(x,y)\! -\! \widetilde P_n(x,y)|\!+\! |\pi^*(x)\!-\!\widetilde \pi_n(x)|\widetilde P_n(x,y)\leq K\ve_n \err_n$},
\]
where we used that $\widetilde P_n(x,y)\leq K\ve_n$ for $x \neq y$. Thus, for all $f:\mS \to \RR$, the respective Dirichlet forms can be compared as
\[
\scalebox{0.96}{$| \mathcal E_{P_n}(f,f)\! -\! \mathcal E_{\tilde P_n}(f,f)|\!=\! \frac{1}{2}\left |  \sum \limits_{x \neq y}\!( \pi^*(x) P_n(x,y) \!-\!\widetilde \pi_n(x) \widetilde P_n(x,y))(f(y)\! \!-\!\!f(x))^2 \right |\leq \!K \ve_n \err_n \Var_{\pi^*}(f)$},
\]
where a factor $1/\min_x \pi^*(x)$ squared is absorbed into the constant. 

Now, using that $|\Var_{\pi^*}(f) - \Var_{\widetilde \pi_n}(f)| \leq K \err_n$ and taking $f$ such that $\Var_{\widetilde \pi_n}(f)=1$, we obtain
$$\mathcal E_{\tilde P_n}(f,f) \geq \mathcal E_{P_n}(f,f) -K \ve_n \err_n \quad \text{so} \quad \lambda(\widetilde P_n)\geq \inf_{\Var_{\widetilde \pi_n}(f)=1}\mathcal E_{P_n}(\!f,\!f) -K \ve_n \err_n,$$
by taking the infimum. Setting $g = f / \left(\Var_{\pi^*}(f)\right)^{1/2}$, we have $\Var_{\pi^*}(g)=1$ (taking $\err_n$ small without loss of generality). Hence, $\mathcal E_{P_n}(f,f)=\Var_{\pi^*}(f)\mathcal E_{P_n}(g,g) \geq \Var_{\pi^*}(f)\lambda(P_n)$. Therefore, using that $\lambda(P_n)\geq K\ve_n$ and that $\Var_{\pi^*}(f)\geq 1-K\err_n$,
\[
\lambda(\widetilde P_n) \geq \lambda(P_n)\inf_{\Var_{\widetilde \pi_n}(f)=1}\Var_{\pi^*}(f) - K\ve_n \err_n \geq \lambda(P_n)(1-K\err_n) - K\ve_n\err_n \geq K \ve_n.
\]
Let us now see how the estimates given by Proposition~\ref{prop:Q2} translate to the sequence $(\widetilde P_n)_n$. From the previous analysis, we know that $\infn{\widetilde \pi_n - \pi^*} \leq K \err_n$ and that $\infn{\widetilde Q_n} \leq K n^{-A}$, where $\widetilde Q_n$ denotes the pseudo-inverse of $\widetilde P_n$. Moreover, by the triangle inequality, $\infn{\widetilde P_{n+1} - \widetilde P_n} \leq K n^{-(1+A)}$, and therefore $\infn{\widetilde \pi_{n+1}-\widetilde \pi_n}\leq \infn{\widetilde P_{n+1}-\widetilde P_n}\infn{\widetilde Q_n} \leq K n^{-1}$. Hence, the bounds for $U_n^i$, $i \in \{0,\ldots,3\}$, take the form
\begin{equation*} \infn{U_n^0}\leq \frac{K}{n}\sum_{i=1}^{n}\err_i, \quad \EE\infn{U_n^1} \leq \frac{2 \sqrt{2}}{n^{(1-A)/2}}, \quad \infn{U_n^2}\leq \frac{K}{n^{1-A}}, \quad \text{and}\quad \infn{U_n^3}\leq K \frac{\ln(n+1)}{n}. \end{equation*}

The almost sure convergence of $v_n$ then follows exactly as in the previous argument. Summing the bounds above yields the claimed rates. $\qed$
\end{appendix}

\end{document}